\DeclareMathOperator*\lowlim{\underline{lim}}
\DeclareMathOperator*\uplim{\overline{lim}}
\begin{document}

\newtheorem{theorem}{Theorem}
\newtheorem{lemma}[theorem]{Lemma}
\newtheorem{claim}[theorem]{Claim}
\newtheorem{cor}[theorem]{Corollary}
\newtheorem{conj}[theorem]{Conjecture}
\newtheorem{prop}[theorem]{Proposition}
\newtheorem{definition}[theorem]{Definition}
\newtheorem{constr}[theorem]{Construction}
\newtheorem{question}[theorem]{Question}
\newtheorem{example}[theorem]{Example}
\newcommand{\hh}{{{\mathrm h}}}
\newtheorem{remark}[theorem]{Remark}

\numberwithin{equation}{section}
\numberwithin{theorem}{section}
\numberwithin{table}{section}
\numberwithin{figure}{section}

\def\sssum{\mathop{\sum\!\sum\!\sum}}
\def\ssum{\mathop{\sum\ldots \sum}}
\def\iint{\mathop{\int\ldots \int}}

\newcommand{\diam}{\operatorname{diam}}

\def\squareforqed{\hbox{\rlap{$\sqcap$}$\sqcup$}}
\def\qed{\ifmmode\squareforqed\else{\unskip\nobreak\hfil
\penalty50\hskip1em \nobreak\hfil\squareforqed
\parfillskip=0pt\finalhyphendemerits=0\endgraf}\fi}

\newfont{\teneufm}{eufm10}
\newfont{\seveneufm}{eufm7}
\newfont{\fiveeufm}{eufm5}
%
%
\newfam\eufmfam
     \textfont\eufmfam=\teneufm
\scriptfont\eufmfam=\seveneufm
     \scriptscriptfont\eufmfam=\fiveeufm
%
%
\def\frak#1{{\fam\eufmfam\relax#1}}

\newcommand{\bflambda}{{\boldsymbol{\lambda}}}
\newcommand{\bfmu}{{\boldsymbol{\mu}}}
\newcommand{\bfxi}{{\boldsymbol{\eta}}}
\newcommand{\bfrho}{{\boldsymbol{\rho}}}
\newcommand{\bfeps}{{\boldsymbol{\eps}}}

\def\eps{\varepsilon}

\def\fK{\mathfrak K}
\def\fT{\mathfrak{T}}
\def\fL{\mathfrak L}
\def\fR{\mathfrak R}

\def\fA{{\mathfrak A}}
\def\fB{{\mathfrak B}}
\def\fC{{\mathfrak C}}
\def\fM{{\mathfrak M}}
\def\fS{{\mathfrak  S}}
\def\fU{{\mathfrak U}}
\def\fZ{{\mathfrak Z}}
\def\fW{{\mathfrak W}}

\def\T {\mathsf {T}}
\def\Tor{\mathsf{T}_d}
\def\Tore{\widetilde{\mathrm{T}}_{d} }

\def\sM {\mathsf {M}}

\def\Kmnd{\cK_d(m,n)}
\def\Kmnp{\cK_p(m,n)}
\def\Kmnq{\cK_q(m,n)}

\def \balpha{\bm{\alpha}}
\def \bbeta{\bm{\beta}}
\def \bgamma{\bm{\gamma}}
\def \bdelta{\bm{\delta}}
\def \bzeta{\bm{\zeta}}
\def \blambda{\bm{\lambda}}
\def \bchi{\bm{\chi}}
\def \bphi{\bm{\varphi}}
\def \bpsi{\bm{\psi}}
\def \bnu{\bm{\nu}}
\def \bomega{\bm{\omega}}

\def \bell{\bm{\ell}}

\def\eqref#1{(\ref{#1})}

\def\vec#1{\mathbf{#1}}

\newcommand{\abs}[1]{\left| #1 \right|}

\def\CC{\mathbb{C}}
\def\Zq{\mathbb{Z}_q}
\def\Zqx{\mathbb{Z}_q^*}
\def\Zd{\mathbb{Z}_d}
\def\Zdx{\mathbb{Z}_d^*}
\def\Zf{\mathbb{Z}_f}
\def\Zfx{\mathbb{Z}_f^*}
\def\Zp{\mathbb{Z}_p}
\def\Zpx{\mathbb{Z}_p^*}
\def\cM{\mathcal M}
\def\cE{\mathcal E}
\def\cH{\mathcal H}

\def\le{\leqslant}

\def\ge{\geqslant}

\def\sfB{\mathsf {B}}
\def\sfC{\mathsf {C}}
\def\L{\mathsf {L}}
\def\FF{\mathsf {F}}

\def\sE {\mathscr{E}}
\def\sS {\mathscr{S}}

\def\cA{{\mathcal A}}
\def\cB{{\mathcal B}}
\def\cC{{\mathcal C}}
\def\cD{{\mathcal D}}
\def\cE{{\mathcal E}}
\def\cF{{\mathcal F}}
\def\cG{{\mathcal G}}
\def\cH{{\mathcal H}}
\def\cI{{\mathcal I}}
\def\cJ{{\mathcal J}}
\def\cK{{\mathcal K}}
\def\cL{{\mathcal L}}
\def\cM{{\mathcal M}}
\def\cN{{\mathcal N}}
\def\cO{{\mathcal O}}
\def\cP{{\mathcal P}}
\def\cQ{{\mathcal Q}}
\def\cR{{\mathcal R}}
\def\cS{{\mathcal S}}
\def\cT{{\mathcal T}}
\def\cU{{\mathcal U}}
\def\cV{{\mathcal V}}
\def\cW{{\mathcal W}}
\def\cX{{\mathcal X}}
\def\cY{{\mathcal Y}}
\def\cZ{{\mathcal Z}}
\newcommand{\rmod}[1]{\: \mbox{mod} \: #1}

\def\cg{{\mathcal g}}

\def\vy{\mathbf y}
\def\vr{\mathbf r}
\def\vx{\mathbf x}
\def\va{\mathbf a}
\def\vb{\mathbf b}
\def\vc{\mathbf c}
\def\vh{\mathbf h}
\def\vk{\mathbf k}
\def\vm{\mathbf m}
\def\vz{\mathbf z}
\def\vu{\mathbf u}
\def\vv{\mathbf v}

\def\e{{\mathbf{\,e}}}
\def\ep{{\mathbf{\,e}}_p}
\def\eq{{\mathbf{\,e}}_q}

\def\Tr{{\mathrm{Tr}}}
\def\Nm{{\mathrm{Nm}}}

 \def\SS{{\mathbf{S}}}

\def\lcm{{\mathrm{lcm}}}

 \def\0{{\mathbf{0}}}

\def\({\left(}
\def\){\right)}
\def\l|{\left|}
\def\r|{\right|}
\def\fl#1{\left\lfloor#1\right\rfloor}
\def\rf#1{\left\lceil#1\right\rceil}
\def\sumstar#1{\mathop{\sum\vphantom|^{\!\!*}\,}_{#1}}

\def\mand{\qquad \mbox{and} \qquad}

\def\tblue#1{\begin{color}{blue}{{#1}}\end{color}}




\hyphenation{re-pub-lished}

\mathsurround=1pt

\def\bfdefault{b}

\def \F{{\mathbb F}}
\def \K{{\mathbb K}}
\def \N{{\mathbb N}}
\def \Z{{\mathbb Z}}
\def \Q{{\mathbb Q}}
\def \R{{\mathbb R}}
\def \C{{\mathbb C}}
\def\Fp{\F_p}
\def \fp{\Fp^*}

 \def \xbar{\overline x}


\title[Small values of  Weyl sums]{Small values  of  Weyl sums}

 \author[C. Chen] {Changhao Chen}

\address{Department of Pure Mathematics, University of New South Wales,
Sydney, NSW 2052, Australia}
\email{changhao.chenm@gmail.com}

 \author[I. E. Shparlinski] {Igor E. Shparlinski}

\address{Department of Pure Mathematics, University of New South Wales,
Sydney, NSW 2052, Australia}
\email{igor.shparlinski@unsw.edu.au}

\begin{abstract}  
We prove that the set of $(x_1, \ldots, x_d)\in [0,1)^d$,  such that    
$$
\lowlim_{N\to \infty}\left| \sum_{n=1}^N\exp(2 \pi i (x_1n+\ldots + x_dn^d)) \right| =0,
$$
contains a  dense $\cG_\delta$  set in $[0,1)^d$ and has positive Hausdorff dimension.
Similar statements are also established for the generalised Gaussian sums 
$$
\sum_{n=1}^N\exp(2\pi i x n^d), \qquad x \in [0,1). 
$$
\end{abstract}

\keywords{Weyl sums, Gauss sums, $\cG_\delta$ set, Hausdorff dimension}

\subjclass[2010]{11J83,  11K38,  11L15}

\maketitle


\section{Introduction}
\subsection{Motivation}
For an integer $d \geqslant 2$, let $\Tor = (\R/\Z)^d$ denote  the  $d$-dimensional unit torus.  For  a vector $\vx = (x_1, \ldots, x_d)\in \Tor$ and $N \in\N$, we consider the {\it Weyl sums\/}
$$
S_d(\vx; N)=\sum_{n=1}^{N}\e\(x_1 n+\ldots +x_d n^{d} \), 
$$
where   throughout  the paper we denote $\e(x)=\exp(2 \pi i x)$.  
Since we also consider   {\it monomial sums\/}
$$
G_d(x;N)=\sum_{n=1}^{N} e(xn^{d}), 
$$
it  is also convenient to define 
$$
\T = \T_1 = \R/\Z.
$$
We recall that for $d=2$ the sums $ G_2(x;N)$ are also known as  {\it Gaussian sums\/}
and we denote
$$
G(x;N)= G_2(x;N).
$$

Obtaining upper bounds on these sums has received a lot attention 
over the last several decades, most significantly due the the proof of the called {\it main conjecture
in the Vinogradov  mean value theorem\/} by  Bourgain, Demeter and Guth~\cite{BDG} (for $d \geqslant 4$) 
and Wooley~\cite{Wool2} (for $d=3$)  (see also~\cite{Wool5}). In particular,   the state of art 
is conveniently summarised by Bourgain~\cite[Theorem~5]{Bourg}. 

On the other hand, the problem of the distribution of values and in particular of lower bounds seems
to be less know with just several sporadic results in the literature, mostly related to the case $d=2$, and 
those do not seem to be widely known. Motivated by this, we first give a survey of known results 
about the measure and topological properties of sets of $\vx  \in \Tor$ and $x \in \T$  large 
sums $S_d(\vx; N)$ and $G_d(x; N)$
and then obtain two new  results.

\subsection{A survey of results on the distribution of Weyl sums}
We first show some known results for the case $d=2$.   The study of the sums 
$S_2(\vx; N)$ and $G(x; N)$
has been initiated by Hardy and Littlewood~\cite{HL}. 

 First we recall  we say  that  $x\in T$ has bounded partial quotients if $\sup_{n\in \N} a_n <\infty$, where 
$x =  [a_1, a_2, \ldots]$ is  the continued fraction representation of $x$.
Hardy and Littlewood~\cite{HL} have given the following lower and upper bounds.

\begin{theorem} {\rm(Hardy and Littlewood~\cite[Theorems~2.22 and~2.25]{HL})}
\label{thm:HL}
Let $x\in \T$ be irrational. Then  
\begin{itemize}
\item[(i)] there exists a constant $c>0$ such that 
$$
\left |G(x; N)\right | \ge c \sqrt{N}
$$
for infinitely many $N\in \N$; 

\item[(ii)] if the continued fraction of $x$ has bounded partial quotients,  then there exist absolute constants  $C>c > 0$ such that for any $y\in \T$ one has 
$$
c\sqrt{N}\le  \left |S_2\((y,x); N\)\right | \le C \sqrt{N}.
$$
\end{itemize}
\end{theorem}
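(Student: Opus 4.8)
The plan is to reduce both statements to the classical reciprocity law (approximate functional equation) for incomplete quadratic Gauss sums, which is nothing but a single application of van der Corput's $B$-process (Poisson summation followed by stationary phase), fed by the continued fraction of $x$. Two ingredients are needed. First, the exact evaluation of \emph{complete} Gauss sums: for coprime $a,q$ with $q\ge 1$ one has $\bigl|\sum_{r=1}^{q}\e(ar^2/q)\bigr|=\sqrt q$ if $q$ is odd, $\sqrt{2q}$ if $4\mid q$, and $0$ if $q\equiv 2\pmod 4$, and completing the square shows the linearly twisted sum $\sum_{r=1}^{q}\e\bigl((ar^2+br)/q\bigr)$ has the same modulus for every $b\in\Z$; restricting to odd $q$ (replacing $q$ by $2q$ when necessary, at the cost of absolute constants) these complete sums have modulus exactly $\sqrt q$ and never vanish. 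Second, the transformation itself: given a convergent $p/q$ of $x$, write $x=p/q+\delta$ with $\gcd(p,q)=1$, $|\delta|\le 1/q^{2}$, substitute $n=qs+r$ into $S_2\bigl((y,x);N\bigr)$, apply the $B$-process to the inner sum over $s$, and sum over $r$ (the quadratic-in-$r$ phase recombines, via completing the square, into a twisted complete Gauss sum — this is where the first ingredient enters). The outcome, valid for $N$ in a suitable range, is an identity of the shape
$$
S_2\bigl((y,x);N\bigr)=\frac{\gamma}{\sqrt{2|\delta|q}}\,S_2\bigl((y',x');N'\bigr)+O\bigl(\sqrt q\bigr),
$$
where $|\gamma|=1$, the new length satisfies $N'\asymp |\delta|qN$, the new quadratic coefficient has the form $x'\equiv -\bigl(1/(4\delta q^{2})+\overline{4p}/q\bigr)\pmod 1$ (with $\overline{4p}$ the inverse of $4p$ modulo $q$), and $y'$ is an explicit linear shift of $y$. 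Since $(2|\delta|q)^{-1/2}\asymp (N/N')^{1/2}$, the transformation is essentially norm preserving; its derivation is a routine but careful stationary-phase estimate, the error absorbing the endpoints of the $s$- and $r$-ranges and the non-stationary frequencies.

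For part (i), apply the transformation with $y=0$ and $q=q_k$ a convergent denominator of $x$, so that $|\delta|\asymp (q_kq_{k+1})^{-1}$ and $N'\asymp N/q_{k+1}$. Choosing $N$ in a short range near $q_{k+1}$ forces $N'=O(1)$, so the transformed sum is a fixed-length, generically nonzero theta sum, and reading off the identity gives $|G(x;N)|\asymp (2|\delta|q_k)^{-1/2}\asymp\sqrt{q_{k+1}}\asymp\sqrt N$ up to an error $O(\sqrt{q_k})$. When $q_{k+1}/q_k$ is large the main term dominates outright; when it is bounded, one averages $|G(x;N)|^{2}$ over a short range of scales $N$ (over which the main term is essentially constant while the error oscillates and averages down) to produce a single $N$ with $|G(x;N)|\gg\sqrt N$. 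As $q_k\to\infty$, this yields infinitely many such $N$, which is (i).

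For part (ii), iterate the transformation along the continued fraction of $x$: at the $j$-th step the length passes from $N_j$ to $N_{j+1}\asymp |\delta_j|q^{(j)}N_j$, where $q^{(j)}$ is the convergent denominator chosen ``at scale $N_j$''. Since $x$ has bounded partial quotients, consecutive convergent denominators are comparable up to an absolute constant, so one can arrange that each $N_{j+1}$ is a fixed fraction of $N_j$, and after $O(\log N)$ steps the length is $O(1)$. Multiplying the identities, the per-step multipliers (each of modulus $(2|\delta_j|q^{(j)})^{-1/2}$) telescope to $\asymp (N_{\mathrm{end}}/N)^{1/2}$, while the accumulated errors, geometrically small relative to the running main term, sum to $O(\sqrt N)$ with an \emph{absolute} implied constant. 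Hence $S_2\bigl((y,x);N\bigr)=\sqrt N\,\Theta+O(\sqrt N)$ with $\Theta$ the renormalised terminal theta sum of bounded length; this gives the upper bound $|S_2\bigl((y,x);N\bigr)|\le C\sqrt N$ immediately. For the lower bound one uses that, again by bounded partial quotients, the sequence of renormalised data stays in a fixed compact region away from the degenerate configurations, so $|\Theta|$ is bounded below by an absolute constant uniformly in $N$ and in $y$ (the finitely many residually awkward terminal configurations being dealt with by a further averaging over a bounded range of scales). Since $y$ is merely translated at each step, all constants are independent of $y$.

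The crux is twofold: proving the transformation identity with an error term that is uniformly $O(\sqrt q)$ (and keeping the admissible range of $N$ under honest control), and, in part (ii), the uniform-in-$(N,y)$ lower bound on the renormalised terminal sum $\Theta$. Both lean on the bounded-partial-quotients hypothesis: it is exactly what confines the renormalisation orbit to a compact set, keeps the transformed lengths away from pathological near-integer values, and forces the geometric series of errors to converge with an absolute rather than $x$-dependent constant. I expect the second point — ruling out accidental near-cancellation at \emph{every} scale — to be the genuinely delicate part.
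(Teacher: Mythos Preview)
The paper does not supply a proof of this theorem: it is quoted in the survey section and attributed to Hardy and Littlewood~\cite[Theorems~2.22 and~2.25]{HL}, so there is no argument in the paper itself to compare your proposal against.

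Your outline is, however, essentially the classical Hardy--Littlewood method (the approximate functional equation for incomplete theta sums, iterated along the continued fraction expansion of $x$), so in that sense it matches the cited source. The reduction in part~(i) to a transformed sum of bounded length near a convergent denominator, and the iteration in part~(ii) using bounded partial quotients to control the accumulated error geometrically, are both faithful to the original argument in~\cite{HL}.

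The one place your sketch is genuinely incomplete is the lower bound in part~(ii). Your proposed mechanism --- that the renormalised terminal data lie in a compact region ``away from degenerate configurations'' --- does not by itself rule out $|\Theta|$ being arbitrarily small for specific $(N,y)$: compactness of the parameter space does not prevent a continuous function from vanishing. The fallback you mention (``a further averaging over a bounded range of scales'') would need to be made precise, in particular one must show, uniformly in $y$, that not all nearby scales can be simultaneously bad. In Hardy and Littlewood's treatment this is handled differently, by keeping explicit track of a nonvanishing main term through the iteration and showing that the error series is strictly dominated by it; you have correctly identified the crux but not yet closed it.
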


Furthermore, except for  a set of $x\in \T$ of  Lebesgue measure  zero, Fiedler, Jurkat and K\"orner~\cite[Theorem~2]{FJK} 
have obtained the optimal bound for the sums $G(x; N)$.

We say that some property holds for almost all $\vx \in \Tor$ if it holds for a set 
 $\cX \subseteq \Tor$ of  Lebesgue measure  $\lambda(\cX) = 1$.   

\begin{theorem} {\rm(Fiedler, Jurkat and K\"orner~\cite[Theorem~2]{FJK})} 
\label{thm:FJK}
Suppose that  $\{f(n)\}_{n=1}^{\infty}$ is a non-decreasing sequence of positive numbers. Then for almost all  $x\in \T$  one has 
$$ 
\uplim_{N\to  \infty} \frac{ \left |G(x; N)\right |}{\sqrt{N} f(N)}<\infty\quad  \Longleftrightarrow \quad \sum_{n=1}^{\infty} \frac{1}{n f(n)^{4}} <\infty.
$$
\end{theorem}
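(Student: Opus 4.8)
The plan is to convert bounds on $|G(x;N)|$ into conditions on the continued fraction expansion of $x$, and then to apply the metric theory of continued fractions. Write $x=[0;a_1,a_2,\dots]$ with convergents $p_j/q_j$. The analytic input is the theta transformation formula, equivalently the Hardy--Littlewood approximate functional equation, for the incomplete Gauss sum $G(x;N)$; applying it once and then iterating down the continued fraction algorithm produces, for $q_j\le N<q_{j+1}$, an identity
$$
G(x;N)=\frac{c_{j}(x)}{\sqrt{q_j}}\int_{0}^{N}\e\bigl(\eta\, t^{2}\bigr)\,dt+O(\log N),
$$
where $\eta=x-p_j/q_j$ and $|c_j(x)|\asymp 1$, valid when $q_j\not\equiv 2\pmod 4$; the remaining denominators are handled by moving to an adjacent convergent and only affect implied constants. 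Since $\bigl|\int_{0}^{N}\e(\eta t^{2})\,dt\bigr|\asymp\min\bigl(N,|\eta|^{-1/2}\bigr)$ and $|\eta|\asymp 1/(q_jq_{j+1})$, and the error term is negligible once $N$ is large, this gives the two-sided estimate
$$
\left|G(x;N)\right|\asymp\frac{1}{\sqrt{q_j}}\,\min\bigl(N,\,(q_jq_{j+1})^{1/2}\bigr),\qquad q_j\le N<q_{j+1},
$$
and hence, maximising over each block $[q_j,q_{j+1})$ (the maximum being attained near $N\asymp(q_jq_{j+1})^{1/2}$),
$$
\max_{q_j\le N<q_{j+1}}\frac{\left|G(x;N)\right|}{\sqrt N}\;\asymp\;\left(\frac{q_{j+1}}{q_j}\right)^{1/4}\;\asymp\;a_{j+1}^{1/4}.
$$

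From here the proof reduces to a Diophantine statement. Finiteness of $\uplim_{N\to\infty}|G(x;N)|/(\sqrt N f(N))$ means $|G(x;N)|\le c_x\sqrt N\,f(N)$ for all large $N$; by the displayed maxima together with the monotonicity of $f$ (one may assume $f$ is slowly varying, as it must be in the only delicate case, namely when the series diverges), this is equivalent to
$$
a_{j+1}\le c_x\,f(q_j)^{4}\qquad \text{for all sufficiently large } j.
$$
It therefore suffices to show that, for almost all $x\in\T$, this last condition holds if and only if $\sum_{n}1/(n f(n)^{4})<\infty$.

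This is precisely the situation governed by the Borel--Bernstein theorem on large partial quotients. Under Lebesgue measure one has $\lambda\{x:\ a_{j+1}(x)>t\}\asymp 1/t$ uniformly in $j$, with the same bound surviving after conditioning on $a_1,\dots,a_j$, and by L\'evy's theorem $q_j=e^{(\beta+o(1))j}$ for almost all $x$, where $\beta=\pi^{2}/(12\log 2)$. Consequently, for every fixed $C>0$,
$$
\sum_{j}\lambda\bigl\{x:\ a_{j+1}(x)>C f(q_j(x))^{4}\bigr\}\;\asymp\;\sum_{j}f\bigl(e^{\beta j}\bigr)^{-4}\;\asymp\;\int_{1}^{\infty}\frac{du}{u\,f(u)^{4}}\;\asymp\;\sum_{n=1}^{\infty}\frac{1}{n\,f(n)^{4}},
$$
the implied constants depending only on $C$, and using the monotonicity of $f$ and the integral and condensation tests. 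If the series converges, the convergence half of the Borel--Cantelli lemma gives, for almost all $x$, $a_{j+1}\le f(q_j)^{4}$ for all large $j$, so that $\uplim_{N\to\infty}|G(x;N)|/(\sqrt N f(N))<\infty$. If it diverges, the conditional quasi-independence of the partial quotients makes the events $\{a_{j+1}>C f(q_j)^{4}\}$ amenable to a converse Borel--Cantelli lemma, which combined with a zero--one law gives, for almost all $x$ and every $C$, that $a_{j+1}>C f(q_j)^{4}$ for infinitely many $j$; hence $\uplim_{N\to\infty}|G(x;N)|/(\sqrt N f(N))=\infty$. Together with the reduction above, this proves the theorem.

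\emph{The main obstacle.} Two points carry the real content. First, the renormalisation of $G(x;N)$ must be arranged so that the main term is reproduced exactly, up to unimodular factors, through all $\asymp\log N$ iterations, with the accumulated error remaining $O(\log N)$ and hence negligible against $\sqrt N\,f(N)$: a single stray factor of $\log N$ would destroy the sharp fourth power, and the $2$-adic behaviour of quadratic Gauss sums (vanishing when $q\equiv 2\pmod 4$, an extra $\sqrt 2$ otherwise) must be tracked throughout, although, having bounded density, such denominators only perturb constants. Second, and more essential, the divergence direction requires the converse Borel--Cantelli lemma for events that are neither independent nor governed by deterministic thresholds; one must control their pairwise correlations via the conditional quasi-independence of continued-fraction digits, and then invoke a zero--one law --- ultimately the ergodicity of the Gauss map --- to pass from positive measure to full measure.
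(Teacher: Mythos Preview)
The paper does not contain a proof of this theorem. Theorem~\ref{thm:FJK} appears in Section~1.2, which is explicitly a \emph{survey} of known results; it is attributed to Fiedler, Jurkat and K\"orner~\cite{FJK} and is stated without proof, serving only as background and motivation for the paper's own results (Theorems~\ref{thm:zero} and~\ref{thm:H-dim}). There is therefore nothing in the paper to compare your argument against.

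That said, your outline does follow the strategy of the original source~\cite{FJK}: reduce the size of $|G(x;N)|$ to the growth of the partial quotients via an iterated approximate functional equation, obtain the two-sided estimate showing $\max_{q_j\le N<q_{j+1}}|G(x;N)|/\sqrt{N}\asymp a_{j+1}^{1/4}$, and then invoke the Borel--Bernstein metric theory. Your identification of the two genuine difficulties --- keeping the accumulated error at $O(\log N)$ through the iteration (including the $2$-adic bookkeeping) and establishing the divergence half via a quantitative Borel--Cantelli with quasi-independence of the digits --- is accurate. As a sketch of the original proof it is reasonable, though several steps (the uniform two-sided bound, the passage from ``$f(N)$ on $[q_j,q_{j+1})$'' to ``$f(q_j)$'' when $f$ is not slowly varying, and the zero--one argument) would need to be fleshed out considerably to stand on their own.
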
  

For $x\in [0,1]$ instead of obtaining the bound of the sums 
$\sum_{n=1}^N \e(n^2x)$, Jurkat and van Horne \cite{JH1, JH2, JH3} studied  the sequence of the distribution function 
$$
\Psi_N(\alpha)=\lambda (\{x\in [0,1]: N^{-1/2} |\sum_{n=1}^N \e(n^2x) |\ge \alpha\}),
$$
where $\lambda$ is the Lebesgue measure. Among other things Jurkat and van Horne \cite{JH1, JH2, JH3} proved that $\Psi_N$ convergences to a limiting distribution (the limit is not normal distribution).

Marklof~\cite{Marklof} has studied the  asymptotic behaviour of the sums 
$$
N^{-1/2} \sum_{n=1}^N \e(n^2x) 
$$
in the complex plane $\C$ as $N\rightarrow \infty$ for all $x\in \R$.

We remark that the methods of Jurkat and van Horne \cite{JH1, JH2, JH3} are mainly based on the circle method, Diophantine approximations, and  bounds of  Kloosterman sums, while the approach  of Marklof~\cite{Marklof} stems from
the theory of dynamical systems.

For quadratic
 Weyl sums, a generalisation of Theorem~\ref{thm:FJK} is given by Fedotov and Klopp~\cite[Theorem~0.2]{FK}). Furthermore, Fedotov and Klopp~\cite{FK} have given a similar result   for the sums $S_2(\vx; N)$, however 
adding the term $\e(x_1n)$ leads to more cancellations in the sums $S_2(\vx; N)$.

\begin{theorem} {\rm(Fedotov and Klopp~\cite[Theorem~0.1]{FK})}
\label{thm:FK}
Suppose that  $\{g(n)\}_{n=1}^{\infty}$ is a non-decreasing sequence of positive numbers. Then for almost all  $\vx\in \T_2$  one has 
$$
\uplim_{N\to  \infty} \frac{  \left |S_2(\vx; N)\right |}{\sqrt{N} g(\ln  N)}<\infty \quad  \Longleftrightarrow \quad  \sum_{n=1}^{\infty} \frac{1}{ g(n)^{6}} <\infty.
$$
\end{theorem}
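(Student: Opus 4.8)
The statement is a Khinchin--type ``convergence/divergence'' dichotomy, so the natural plan has two halves: showing that $\sum_n g(n)^{-6}<\infty$ forces $\uplim<\infty$ almost everywhere (an application of the Borel--Cantelli lemma to a well--chosen family of ``exceptional'' sets), and showing that $\sum_n g(n)^{-6}=\infty$ forces $\uplim=\infty$ almost everywhere (a converse, quasi--independent Borel--Cantelli argument). In both halves one first replaces ``all $N$'' by the lacunary sequence $N_\ell=2^\ell$: the trivial bound $|S_2(\vx;N)-S_2(\vx;N')|\le|N-N'|$ together with monotonicity of $g$ lets one interpolate between consecutive $N_\ell$, and --- crucially --- along this sequence $\log N_\ell\asymp\ell$, so that $\sum_\ell g(\log N_\ell)^{-6}$ converges (resp.\ diverges) exactly when $\sum_n g(n)^{-6}$ does.

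For the first half, the starting point is the moment identity
$$
\int_{\T_2}|S_2(\vx;N)|^{2k}\,d\vx = J_{k,2}(N),
$$
where $J_{k,2}(N)$ counts the solutions of $n_1+\dots+n_k=m_1+\dots+m_k$ and $n_1^2+\dots+n_k^2=m_1^2+\dots+m_k^2$ in $1\le n_i,m_i\le N$ --- the Vinogradov mean value of degree two. One has $J_{3,2}(N)\asymp N^3\log N$ (the degree--two system with three variables per side is critical), so $\|S_2(\cdot;N)\|_{L^6(\T_2)}\asymp\sqrt N\,(\log N)^{1/6}$, and this is where the exponent $6$ is already visible. However, a bare Chebyshev/Borel--Cantelli argument with this moment yields only the scaling $\sqrt N\,g(N)$ (with a further logarithmic loss), not $\sqrt N\,g(\log N)$; upgrading $N$ to $\log N$ is the heart of the matter and must exploit the special arithmetic of degree two. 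The plan here is to invoke the \emph{continued--fraction renormalisation} of quadratic Weyl sums (Hardy--Littlewood; Fedotov--Klopp): via the approximate functional equation (Gauss--sum reciprocity / van der Corput's $B$--process), $S_2(\vx;N)$ equals, up to an explicit factor, a sum of the same shape at scale $N/q_1$ with frequency pushed forward by the Gauss map, where $q_1$ is the first continued--fraction denominator of $x_2$; iterating $\asymp\log N$ times realises $|S_2(\vx;N)|/\sqrt N$ as a cocycle (a product of $\asymp\log N$ renormalisation factors) over the Gauss map, and with respect to the Gauss--Kuzmin measure these factors are close to independent and identically distributed, with the tail responsible for the exponent $6$. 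A Borel--Cantelli argument over $\{N_\ell\}$ then closes the first half.

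For the second half, the plan is to exploit the exponential mixing of the Gauss map (the spectral gap of its transfer operator) to show that the events $\{\vx:\ |S_2(\vx;N_\ell)|\ge\sqrt{N_\ell}\,g(\log N_\ell)\}$ are quasi--independent; then divergence of $\sum_\ell\lambda(\cdot)$ --- which, by the same renormalisation and Gauss--Kuzmin computation, is equivalent to divergence of $\sum_n g(n)^{-6}$ --- forces infinitely many of these events to occur, whence $\uplim=\infty$ almost everywhere.

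The main obstacle is the renormalisation/mixing step itself: controlling the accumulated error terms in the approximate functional equation uniformly along an orbit of length $\asymp\log N$, handling atypically large partial quotients (where the $B$--process degenerates), and --- most delicately --- extracting the precise exponent $6$ (rather than some other power) from the integrals against the Gauss--Kuzmin measure, together with the quantitative independence needed for the converse Borel--Cantelli. By comparison, the Vinogradov--type moment input, the interpolation from $\{N_\ell\}$ to all $N$, and the Borel--Cantelli bookkeeping should be routine.
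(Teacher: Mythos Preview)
The paper does \emph{not} prove this statement: Theorem~\ref{thm:FK} is quoted from Fedotov and Klopp~\cite[Theorem~0.1]{FK} as part of the introductory survey, with no proof or sketch given here. There is therefore no ``paper's own proof'' to compare your proposal against.

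That said, your outline is broadly faithful to the actual Fedotov--Klopp argument in~\cite{FK}: the key mechanism there is indeed an exact renormalisation formula for the theta sum (a precise version of the Hardy--Littlewood approximate functional equation), which realises $|S_2(\vx;N)|/\sqrt{N}$ as a multiplicative cocycle over the Gauss map, after which the $0$--$1$ law comes from ergodic/mixing properties with respect to the Gauss--Kuzmin measure. Your identification of the exponent $6$ with the tail of the renormalisation factors is correct in spirit. The main place where your sketch understates the difficulty is that Fedotov and Klopp do not merely iterate an \emph{approximate} functional equation with accumulated errors; their contribution is an \emph{exact} renormalisation identity, and it is precisely this exactness that allows one to control orbits of length $\asymp\log N$ and to extract the sharp exponent. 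The Vinogradov sixth--moment heuristic you mention is a good sanity check for the exponent but plays no role in the actual proof. If you intend to reconstruct the argument, the substantive work is in~\cite{FK}, not in the present paper.
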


For the case $d\ge 3$, the authors in~\cite[Appendix~A]{ChSh1},  and~\cite[Theorem~2.1]{ChSh2} have  shown in two different ways  that  for almost all $\vx\in \Tor$  one has   
 \begin{equation}
\label{eq:M-R}
  \left |S_d(\vx; N)\right |\leqslant  N^{1/2+o(1)} \quad \text{as} \quad N\to  \infty.
\end{equation}

It is natural to conjecture that the bound~\eqref{eq:M-R}  is close to 
the best possible up to the $N^{o(1)}$ term, see also~\cite[Conjecture~1.1]{ChSh1}.  We remark that the conjecture is still open for the case $d\ge 3$.

Observe that we may consider the sums $S_d(\vx; N)$,  $N\in \N$, as a sequence of points in the complex plane $\CC$. Before giving  some results in this direction we show some notation first.

We now need to recall some standard definitions.

\begin{definition} We sat that a set $\cS$ of a topological space $\cX$ is a  $\cG_{\delta}$-set  
if it is a countable intersection of open sets. 
\end{definition}

We could say that a dense $\cG_{\delta}$  set is a ``large'' set in the sense of topology. We remark that the $\cG_{\delta}$  set is closely related to Baire categories, see~\cite[Section~9]{Oxtoby} for more details.



%
%
%

Given $x\in \T$ define 
$$
\cB(x)=\{y\in \T:~S_2\((y,x); N\), \ N\in \N, \text{ is dense in } \C\}.
$$

For a real $x$  the notation we use $\| x\|$  to denote the distance to the closest  integers.

Using  an approach which stems from the theory of dynamical systems and considering the Weyl sums as a cocycle on $\CC$, Forrest~\cite{Forrest1}
have obtained the following result.


\begin{theorem}{\rm(Forrest~\cite[Theorem~2]{Forrest1})}
\label{thm:Forrest}
If $x\in \T$ is  irrational  and 
$$
\lowlim _{q\to  \infty}q^{3/2}\| q x\|<\infty,
$$ 
then  $\cB(x)$ contains a  dense $\cG_{\delta}$ set in $\T$.  
\end{theorem}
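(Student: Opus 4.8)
The topological half of the statement is essentially automatic, so I would put all the work into a single approximation lemma, obtained by renormalising the Weyl sum at a good rational approximation to $x$ coming from the hypothesis.

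\emph{Step 1: reduction to a Baire category statement.} Fix a countable dense set $\{w_k\}_{k\in\N}\subseteq\C$ and put $\varepsilon_l=1/l$. For $k,l,N\in\N$ set
$$
V_{k,l,N}=\left\{y\in\T:\ \left|S_2\((y,x);N\)-w_k\right|<\varepsilon_l\right\}.
$$
Since $y\mapsto S_2\((y,x);N\)$ is a finite trigonometric polynomial in $y$, each $V_{k,l,N}$ is open, hence so is $U_{k,l}=\bigcup_{N\in\N}V_{k,l,N}$, and a routine $\varepsilon/2$ argument gives the identity $\cB(x)=\bigcap_{k,l}U_{k,l}$. Thus $\cB(x)$ is a $\cG_{\delta}$ set, and since $\T$ is a compact (in particular complete) metric space, the Baire category theorem reduces the theorem to proving that each $U_{k,l}$ is dense, i.e.\ to the following assertion: for every $w\in\C$, every $\varepsilon>0$ and every nonempty open interval $I\subseteq\T$ there are $y\in I$ and $N\in\N$ with $\left|S_2\((y,x);N\)-w\right|<\varepsilon$.

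\emph{Step 2: renormalisation at a good denominator.} By the hypothesis (passing to convergents of the continued fraction of $x$) there are arbitrarily large $q$ and integers $a$ with $\gcd(a,q)=1$ and $|x-a/q|\le Cq^{-5/2}$, where $C$ depends only on $x$. Fix such a $q$, so large that $I$ contains many fractions $j/q$, and write $x=a/q+\beta$ with $|\beta|\le Cq^{-5/2}$. On a range $n\le N$, with $N$ a suitable power of $q$, one has $\e(xn^2)=\e\((a/q)n^2\)(1+O(|\beta|n^2))$, so up to a controlled error $S_2\((y,x);N\)$ equals the arithmetic sum $\sum_{n\le N}\e\(yn+(a/q)n^2\)$. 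The latter is governed by the classical theory of (shifted) Gauss sums: it is periodic in $n$ modulo $q$, a complete block $\sum_{r=1}^{q}\e\(yr+(a/q)r^2\)$ has modulus $\asymp\sqrt q$ with an argument obtained by completing the square, taking $k$ consecutive blocks multiplies this by a geometric factor $\sum_{b<k}\e(ybq)$, and an incomplete block lets one interpolate along a Cornu--spiral-type curve. Letting $y$ run over the fractions $j/q$ in $I$ rotates the block phase through a subset of the circle of mesh $O(1/(q|I|))$, while letting $N$ range over the available multiples of $q$ and their successors adjusts the modulus; for $q$ large this should exhibit the attainable values of $S_2\((y,x);N\)$ as an $\varepsilon$-net of a disc of radius growing with $q$, which in particular catches the fixed point $w$. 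This would prove the assertion of Step~1, and hence the theorem.

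\emph{The main obstacle.} The delicate point is the error $\sum_{n\le N}\e\(yn+(a/q)n^2\)\(\e(\beta n^2)-1\)$ caused by the non-rational part $\beta=x-a/q$. On the ranges $N\asymp q$ that carry the full $\sqrt q$ of cancellation, the trivial bound gives only $O(\sqrt q)$, and even an Abel summation against the $O(\sqrt q)$ bound for incomplete Gauss sums leaves an error of order $|\beta|q^{5/2}\asymp C$ --- a constant that does not shrink as $q\to\infty$. So the non-rational part of $x$ cannot simply be discarded, and making the renormalisation rigorous requires either a genuinely multi-scale (continued fraction) bookkeeping that tracks this error through a cascade of scales, or --- the route I would actually follow --- Forrest's dynamical reformulation, in which $N\mapsto S_2\((y,x);N\)$ is read as an orbit of a continuous cocycle over the rotation by $x$ and density of orbits is deduced from a topological transitivity property of the associated skew product. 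It is precisely here that the exponent $3/2$ enters: it is the threshold at which the relevant growth/cohomological estimates for the cocycle close, so that weakening it breaks the argument.
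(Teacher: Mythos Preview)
The paper does not prove this theorem: it appears in Section~1.2, the survey of known results, and is simply quoted from Forrest~\cite[Theorem~2]{Forrest1} with no argument given. So there is no proof in the paper to compare your proposal against.

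As to the proposal on its own merits: your Step~1 is correct and entirely standard --- the identity $\cB(x)=\bigcap_{k,l}U_{k,l}$ holds, each $U_{k,l}$ is open, and Baire reduces everything to density of the $U_{k,l}$. But Step~2 is not a proof. You set up the natural renormalisation at a good denominator $q$, and then you yourself point out that the error coming from $\beta=x-a/q$ is of order $|\beta|q^{5/2}\asymp C$, a constant that does not shrink with $q$; so the direct approach does not close, and you end by saying you would ``actually follow'' Forrest's dynamical reformulation --- without carrying it out. That is an accurate diagnosis of where the difficulty lies, but it means the proposal stops exactly at the point where the real work begins. The exponent $3/2$ in the hypothesis is indeed critical for Forrest's cocycle argument, as you note; however, invoking that argument is not the same as giving it, and nothing in your write-up supplies the transitivity or cohomological input that makes it go through. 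In short: the $\cG_\delta$ reduction is fine, the renormalisation heuristic is the right picture, but the proposal as written has a genuine gap at the crux and defers to the very source the paper is citing.
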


There are serval  interesting generations of Theorem~\ref{thm:Forrest}. 

\begin{theorem}{\rm(Forrest~\cite[Theorem~1.3]{Forrest2})}
Suppose that $x\in \T$ is  irrational  and has continued fraction representation $[a_1, a_2, \ldots]$ such that 
$\sum_{n=1}^\infty 1/a_n <\infty$, and suppose that 
$$
\lowlim _{q\to  \infty}q^{3+\eps}\| q x\|=0,
$$ 
for some $\eps>0$. Then $\cB(x)$ is of full Lebesgue measure in $\T$.
\end{theorem}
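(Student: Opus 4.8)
The plan is to reduce the statement to a countable family of ``hitting'' assertions and then to prove each of them using the extremely good rational approximations to $x$ supplied by the hypotheses. Fix a countable base $\{B_j\}_{j\ge1}$ of balls for the topology of $\C$, with rational centres and radii. Since $y\mapsto S_2\((y,x);N\)$ is continuous for each fixed $N$, each set
$$
\cU_j=\bigl\{y\in\T:\ S_2\((y,x);N\)\in B_j\ \text{for some}\ N\ge1\bigr\}
$$
is open, and $\cB(x)=\bigcap_{j\ge1}\cU_j$; in particular $\cB(x)$ is automatically a $\cG_\delta$ set. It thus suffices to prove that each $\cU_j$ has full Lebesgue measure, i.e. that for every ball $B\subseteq\C$,
$$
\lambda\bigl(\{y\in\T:\ S_2\((y,x);N\)\notin B\ \text{for all}\ N\ge1\}\bigr)=0 .
$$
This is the metric strengthening of the (automatically $\cG_\delta$, density-only) statement underlying Theorem~\ref{thm:Forrest}.

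For a single scale, note that $\lowlim_{q\to\infty}q^{3+\eps}\|qx\|=0$ yields an increasing sequence $(q_k)$ with $\|q_kx\|=o\(q_k^{-3-\eps}\)$; as $\|q_kx\|<1/(2q_k)$ eventually, each $q_k$ is a convergent denominator of $x$. Fix such a ``good'' $k$, write $N=Mq_k+r_0$ with $0\le r_0<q_k$ and $1\le M\le q_k^{c}$ for a suitable $c=c(\eps)>0$, split the sum into $M$ full blocks of length $q_k$ plus a partial block, and replace $x$ by the convergent $p_k/q_k$ inside each block; the total error is $O\(M^{3}q_k^{2}\|q_kx\|\)=o(1)$, and since the summand over a full block factorises one obtains, with $G_k(y)=\sum_{r=1}^{q_k}\e\(yr+xr^2\)$, $D_M(y)=\sum_{m=0}^{M-1}\e\(q_kym\)$ and $H_{r_0}(y)=\sum_{r=1}^{r_0}\e\(yr+xr^2\)$, the identity
$$
S_2\((y,x);Mq_k+r_0\)=G_k(y)\,D_M(y)+\e\(Mq_ky\)\,H_{r_0}(y)+o(1),\qquad\text{uniformly in }y .
$$
When $q_ky\notin\Z$ the geometric sums $D_M(y)$, $M\ge1$, are dense in a circle through $0$ of radius $\asymp\|q_ky\|^{-1}$, and the phases $\e\(Mq_ky\)$ densely fill the unit circle; so, as $M$ and $r_0$ range, the values $S_2\((y,x);N\)$ with $q_k\le N\le q_k^{1+c}$ are, up to $o(1)$, dense in a two-dimensional region $\cR_k(y)$ which contains $0$ and whose size is governed by $|G_k(y)|$, $\max_{r_0}|H_{r_0}(y)|$ and the resonance $\|q_ky\|$.

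The heart of the proof is then to show that, for almost every $y$, the regions $\cR_k(y)$ over the good scales exhaust $\C$: concretely, that for a.e.\ $y$ there are infinitely many good $k$ with $B\(0,c'\sqrt{q_k}\)\subseteq\overline{\cR_k(y)}$, so that $\bigcup_k\cR_k(y)$ is dense in $\C$ since $q_k\to\infty$ (equivalently, a bounded chain of steps at two scales $q_k<q_\ell$ reaches any prescribed target). Three ingredients enter: (i) the lacunarity of $(q_k)$, which for a.e.\ $y$ makes $(q_ky\bmod1)$, and also $(q_kym\bmod1)_{m\le q_k^c}$, equidistribute, so that along a positive proportion of good $k$ the resonance $\|q_ky\|$ is bounded below and $\cR_k(y)$ is genuinely two-dimensional; (ii) the orthogonality $\int_\T|G_k(y)|^2\,dy=q_k$ together with elementary second-moment bounds for the short sums, which bound the measure of the $y$ where $|G_k(y)|$ or $\max_{r_0}|H_{r_0}(y)|$ is atypically small; and (iii) the convergence $\sum_n1/a_n<\infty$ — equivalently $\sum_n q_{n-1}/q_n<\infty$ — which makes the Borel--Cantelli series $\sum_k\lambda(\text{bad }y\text{ at level }k)$ converge, so that the favourable behaviour in (i)--(ii) persists at infinitely many good scales for a.e.\ $y$. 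Combining, a.e.\ $y$ lies in every $\cU_j$, whence $\lambda(\cB(x))=1$.

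I expect the last step to be the main obstacle: upgrading ``dense in the region $\cR_k(y)$'' at each scale to ``dense in all of $\C$'' for almost every $y$. Because every $\cR_k(y)$ is pinned to the origin, reaching a distant point forces $\cR_k(y)$ to be not merely large but thick near $0$ (to contain a disc $B\(0,c'\sqrt{q_k}\)$), and arranging this simultaneously along infinitely many good scales for a.e.\ $y$ is where the hypotheses are fully spent: the exponent $3+\eps$ controls how long a range of $N$ — hence how large a $\cR_k(y)$ — the block approximation reaches at scale $q_k$, while $\sum_n1/a_n<\infty$ is exactly what makes the Borel--Cantelli estimate on the exceptional $y$ summable; the metric conclusion here is strictly stronger than, and requires more than, the topological conclusion of Theorem~\ref{thm:Forrest}. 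Finally, the whole scheme can be recast, as in Forrest's treatment, in terms of the $\C$-valued cocycle generated by the Weyl sums over the uniquely ergodic skew-shift $(u,v)\mapsto(u+2x,\,v+u+x)$ on $\T^2$: the conclusion corresponds to ergodicity of that cocycle for the natural infinite $(\text{Haar}\times\text{Lebesgue})$ measure, with equidistribution of $(q_ky)$ providing the control along the fibre $\{(y,0,0):y\in\T\}$.
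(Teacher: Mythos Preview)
This theorem is not proved in the paper. It appears in the survey portion of the introduction, attributed to Forrest~\cite[Theorem~1.3]{Forrest2}, and is quoted there without any argument; the paper's own proofs concern only Theorems~\ref{thm:zero} and~\ref{thm:H-dim}. So there is no ``paper's proof'' to compare your proposal against.

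As for the proposal itself, it is a plausible high-level outline rather than a proof, and you have correctly flagged the genuine difficulty. A few specific points. Your block factorisation $S_2\((y,x);Mq_k+r_0\)=G_k(y)D_M(y)+\e(Mq_ky)H_{r_0}(y)+o(1)$ suppresses the cross term $2mq_kxr$ in $(mq_k+r)^2$; after replacing $x$ by $p_k/q_k$ this term becomes the integer $2mp_kr$, so the factorisation is correct \emph{after} approximation, but you should say so and track the resulting error honestly. Your error bound $O\(M^3q_k^2\|q_kx\|\)$ and the choice of $c=c(\eps)$ are stated but not derived; a careful count shows the constraint is roughly $M\ll q_k^{(1+\eps)/3}$, and you should check that this range of $M$ is long enough to make the $D_M(y)$ dense on their circle for the $y$ you care about.

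The step that is genuinely missing is the one you single out: you assert that the exceptional set of $y$ at level $k$ has measure controlled so that $\sum_n 1/a_n<\infty$ makes the Borel--Cantelli series converge, but you give no mechanism linking the measure of ``bad $y$ at level $k$'' to $q_{k-1}/q_k$ (or to $1/a_k$). The second-moment bound $\int_\T|G_k(y)|^2\,dy=q_k$ alone does not produce a summable bound of that shape, and the simultaneous control of $\|q_ky\|$, $|G_k(y)|$ and $\max_{r_0}|H_{r_0}(y)|$ along a \emph{subsequence} of convergents requires a more delicate argument than you have indicated. Forrest's actual treatment proceeds via the ergodic/cocycle framework you allude to in your final paragraph; that recasting is not a cosmetic alternative but the place where the work is done, and your sketch does not yet supply a substitute for it.
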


We remark that Fayad~\cite{Fayad} and Greschonig, Nerurkar and Voln\'y~\cite{GNV} studied the ergodic property of the dynamical system which was introduced by Forrest~\cite{Forrest1, Forrest2} (for the purpose of studying the distribution of Weyl sums). 
We refer to~\cite{Fayad, GNV} for more details and reference therein.

We remark that the following conjecture of Forrest~\cite{Forrest1} is still open. 

\begin{conj}{\rm(Forrest~\cite{Forrest1})}
\label{conj:Forr} 
Theorem~\ref{thm:Forrest} is true for every irrational $x$ with 
$\lowlim _{q\to  \infty}q\| q x\|=0$.
\end{conj}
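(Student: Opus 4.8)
The natural plan is to follow Forrest's dynamical strategy and to replace its quantitative core by a sharp one. First, reduce the statement to a single Diophantine approximation fact. Fix a countable dense set $\{w_j\}_{j\ge1}\subset\C$. Since $y\mapsto S_2((y,x);N)$ is a trigonometric polynomial, each set $\{y\in\T:\ |S_2((y,x);N)-w_j|<1/k\}$ is open, and
\[
\cB(x)=\bigcap_{j\ge1}\ \bigcap_{k\ge1}\ \bigcup_{N\in\N}\ \{y\in\T:\ |S_2((y,x);N)-w_j|<1/k\}.
\]
By the Baire category theorem it therefore suffices to prove: for every $w\in\C$, every $\eps>0$ and every nonempty open interval $I\subseteq\T$ there exist $N\in\N$ and $y\in I$ with $|S_2((y,x);N)-w|<\eps$. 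Granting this, every inner union above is open and dense and $\cB(x)$ is a dense $\cG_\delta$.

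Second, exploit a resonance. Because $\lowlim_{q\to\infty}q\|qx\|=0$, choose $q$ --- large, a convergent denominator of $x$ (so $\gcd(a,q)=1$), and with $b/q\in I$ for some integer $b$ --- for which $\|qx\|=\lambda$ with $q\lambda$ as small as we please; write $x\equiv a/q+\beta\pmod 1$ with $|\beta|=\lambda/q$, and split $N=Mq+r$ into $M$ complete blocks of length $q$ and a tail $r<q$. Using $a(mq+s)^2\equiv as^2\pmod q$, a block decomposition gives
\[
S_2((y,x);Mq)=\sum_{m=0}^{M-1}\e(ymq)\,\e(\beta m^2q^2)\,G_q(y+2\beta mq),\qquad G_q(v)=\sum_{s=1}^{q}\e\big(vs+as^2/q+\beta s^2\big).
\]
For $y$ near $b/q$, $G_q$ is a perturbed complete quadratic Gauss sum of modulus comparable to $\sqrt q$, whose argument is steered through $b$ and $y-b/q$; apart from the drift factors $\e(\beta m^2q^2)$ and the shifts $2\beta mq$, the outer sum is a geometric sum in $\e(qy)$ whose modulus can be set anywhere in $[0,M]$ and whose argument rotates with $\{qy\}$ and $M$, the tail length $r$ supplying a final fine adjustment. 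Were the drift negligible, $S_2((y,x);Mq+r)$ would equal, up to $\eps$, a number of the form $\sqrt q$ times a complex number that we can drive through a disc of radius $\asymp\sqrt q\,M$; enlarging $q$, hence $M$, puts any fixed $w$ in range, and one matches $|w|$ and $\arg w$ through the free parameters $b,y,M,r$. Chaining this over a sequence of resonances $q_k$ with $q_k\|q_kx\|\to0$ would then feed the Baire argument.

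The obstacle is precisely the clause ``were the drift negligible'', and it is the entire gap between the conjecture and Theorem~\ref{thm:Forrest}. A crude bound for $\e(\beta n^2)-1$ over a window of length $Mq$ is far too lossy --- it would demand $\|qx\|$ of size roughly $q^{-2}$ --- and even a more careful but non-optimal treatment only recovers Forrest's hypothesis $\lowlim q^{3/2}\|qx\|<\infty$. To reach the hypothesis $\lowlim q\|qx\|=0$ one must replace the crude bounds by genuine asymptotics for the incomplete, perturbed theta sums $G_q(v)$ and $\sum_{m<M}\e(ymq+\beta m^2q^2)$ --- the Hardy--Littlewood and Fiedler--Jurkat--K\"orner circle of ideas behind Theorems~\ref{thm:HL} and~\ref{thm:FJK} --- obtained \emph{uniformly} in the linear parameter $y$ and organised as a renormalisation along the continued fraction of $x$, so that windows at the successive scales $q_k$ glue together with controlled accumulated error. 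Equivalently, in Forrest's cocycle picture one must show that the skew product $(t,z,\zeta)\mapsto\big(t+2x,\ z\e(t+2x),\ \zeta+z\e(t+2x)\big)$ on $\T\times\C\times\C$ has dense orbits for a residual set of initial linear data under only the weakest Diophantine condition on $x$; supplying this sharp, uniform and iterable analysis is the heart of the matter --- and is exactly what remains open.
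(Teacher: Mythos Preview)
The statement you were asked to prove is not a theorem but an \emph{open conjecture}: the paper introduces it with ``We remark that the following conjecture of Forrest~\cite{Forrest1} is still open'' and offers no proof, nor does any such proof exist in the literature. There is therefore nothing in the paper to compare your attempt against.

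Your write-up is not a proof either, and to your credit you say so explicitly in the final sentence: the sharp, uniform, iterable control of the perturbed theta sums ``is exactly what remains open.'' What you have produced is a careful heuristic sketch of \emph{why} Forrest's hypothesis $\lowlim q^{3/2}\|qx\|<\infty$ arises naturally from a block decomposition and crude drift bounds, and \emph{what} would have to be improved to reach the conjectural threshold $\lowlim q\|qx\|=0$. That diagnosis is accurate and well put, but it is commentary on an open problem rather than a proof. The Baire reduction in your first paragraph is correct and standard; the block decomposition in the second is essentially Forrest's own starting point; the third paragraph correctly isolates the missing ingredient. None of this closes the gap, and you should not present it as a proof proposal --- it is an exposition of the difficulty.
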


 Forrest~\cite{Forrest1} also considered the sequence $S_d(\vx; N), N\in \N$ for the case $d\ge 3$. In analogy of  Theorem~\ref{thm:Forrest}, Forrest~\cite{Forrest1} showed the following.  

\begin{theorem}{\rm(Forrest~\cite[Theorem~10]{Forrest1})}
Suppose that $d\ge 3$ and $x_d\in \T$ is irrational such that 
$$
\lowlim _{q\to  \infty}q^{d}\| q x_d\|<\infty,
$$ 
then for a dense $G_\delta$ set of $(x_1, \ldots, x_{d-1})\in [0,1]^{d-1}$, the sequence of partial sums  
$$
S_d(\vx; N) , \quad N=1, 2, \ldots,
$$
is dense in $\mathbb{C}$.
\end{theorem}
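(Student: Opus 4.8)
The plan is to combine a Baire category argument with a direct evaluation of the Weyl sums at scales attached to the very good rational approximations of $x_d$ provided by the hypothesis. Fix a countable dense set $D\subseteq\C$. Keeping $x_d$ fixed, regard $S_d(\vx;N)$ as a function of the free coordinates $(x_1,\dots,x_{d-1})\in[0,1]^{d-1}$, and for $z\in D$ and $k\in\N$ put
$$
\cU_{z,k}=\bigcup_{N\ge1}\bigl\{(x_1,\dots,x_{d-1})\in[0,1]^{d-1}: |S_d(\vx;N)-z|<1/k\bigr\}.
$$
Since $(x_1,\dots,x_{d-1})\mapsto S_d(\vx;N)$ is continuous for each fixed $N$, every $\cU_{z,k}$ is open; and if each $\cU_{z,k}$ is dense then, by the Baire category theorem, $\bigcap_{z\in D,\ k\in\N}\cU_{z,k}$ is a dense $\cG_\delta$ set, every point of which produces a sequence $\bigl(S_d(\vx;N)\bigr)_{N\ge1}$ that is within $1/k$ of every $z\in D$ for every $k$, hence dense in $\C$. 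So everything reduces to proving that each $\cU_{z,k}$ is dense.

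Fix $z$, $k$, a base point $(x_1^{0},\dots,x_{d-1}^{0})$ and $\delta>0$; we must find $(x_1,\dots,x_{d-1})$ within $\delta$ of the base point (in the sup-norm, say) and an $N$ with $|S_d(\vx;N)-z|<1/k$. By hypothesis there are a constant $C$ and arbitrarily large $q$ with $\|q x_d\|\le C/q^{d}$. Fix such a $q$, large in terms of $\delta,k,z$, write $x_d=a/q+\theta$ with $a\in\Z$ and $|\theta|\le C/q^{d+1}$, and (using $1/q<\delta$) pick integers $b_1,\dots,b_{d-1}$ with $|b_i/q-x_i^{0}|<\delta$, setting $x_i=b_i/q$. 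Then the phase $x_1n+\dots+x_dn^{d}$ equals $R(n)/q+\theta n^{d}$ with $R(n)=an^{d}+\sum_{i<d}b_in^{i}\in\Z[n]$, so
$$
S_d(\vx;N)=\sum_{n=1}^{N}\e\bigl(R(n)/q\bigr)\,\e(\theta n^{d})\qquad(N\ge1).
$$
Write $\Sigma_L(R)=\sum_{r=1}^{L}\e(R(r)/q)$ for the partial sums of the corresponding complete exponential sum modulo $q$. For $N\le q$, partial summation in the displayed identity gives
$$
S_d(\vx;N)=\Sigma_N(R)+O\bigl(|\theta|\,q^{d}\max_{L\le q}|\Sigma_L(R)|\bigr),
$$
and if the $b_i$ are chosen so that $R$ is non-degenerate modulo $q$, then Weil's bound together with completion of the short sums gives $\max_{L\le q}|\Sigma_L(R)|\ll q^{1/2}\log q$, so the error above is $O\bigl(Cq^{-1/2}\log q\bigr)=o(1)$. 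This is exactly where the hypothesis $\lowlim_{q\to\infty}q^{d}\|q x_d\|<\infty$ is used: it keeps the drift $\e(\theta n^{d})$ negligible over the whole range $1\le N\le q$.

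It remains to make $\Sigma_N(R)$ lie within $1/(2k)$ of $z$ for some admissible $R$ and some $N\le q$. As $(b_1,\dots,b_{d-1})$ runs over the $\asymp(\delta q)^{d-1}$ admissible tuples and $N$ over $\{0,1,\dots,q\}$, the numbers $\Sigma_N(R)$ form $\asymp q(\delta q)^{d-1}$ points inside the disk of radius $q^{1/2+o(1)}$ about the origin; each individual curlicue $N\mapsto\Sigma_N(R)$ is a connected polygonal path of unit steps, and an equidistribution statement for these incomplete exponential sums as the polynomial $R$ varies over a box shows that the union of these paths does not cluster, so for $q$ large it forms an $o(1)$-net of the disk, which contains $z$. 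Choosing $(R,N)$ with $\Sigma_N(R)$ within $1/(2k)$ of $z$ and invoking the previous display yields $|S_d(\vx;N)-z|<1/k$. Hence $\cU_{z,k}$ is dense, which completes the proof.

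\emph{Main obstacle.} The delicate points are the last two inputs: (i) producing an admissible tuple $(b_1,\dots,b_{d-1})$ for which $R$ is non-degenerate and the incomplete sums satisfy $\max_{L\le q}|\Sigma_L(R)|\ll q^{1/2+o(1)}$ — this rests on Weil's bounds for complete exponential sums modulo $q$, with the usual care for composite moduli and for the excluded degenerate polynomials — and, more seriously, (ii) the equidistribution of the incomplete sums $\Sigma_N(R)$ as $(R,N)$ varies, which is what guarantees that some reachable value lands within $1/k$ of the \emph{prescribed} point $z$ rather than merely within $O(1)$ of it. Both are classical in spirit, but obtaining them with the uniformity in $q$ that the Baire argument demands is the heart of the matter; recasting the problem as the study of a $\C$-valued cocycle over an irrational rotation of a torus, with the Weyl sums as its Birkhoff sums, as Forrest~\cite{Forrest1} does, is the clean way to organise these estimates and to let minimality of the resulting skew-product extension deliver the density conclusion.
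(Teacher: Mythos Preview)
The paper does not contain a proof of this statement: it is quoted in the survey section as Forrest's result~\cite[Theorem~10]{Forrest1}, with no argument given, so there is no in-paper proof to compare against. Your Baire-category reduction to showing that each $\cU_{z,k}$ is dense is indeed the standard opening move and is the framework Forrest himself uses.

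That said, your sketch has a genuine gap, which you yourself flag as ``Main obstacle'' (ii). After reducing to incomplete sums $\Sigma_N(R)$ you assert that ``an equidistribution statement for these incomplete exponential sums as the polynomial $R$ varies over a box shows that the union of these paths does not cluster, so for $q$ large it forms an $o(1)$-net of the disk''. This is the entire content of the theorem, and nothing in the sketch proves it. A count of available pairs $(R,N)$ versus the area of the target disk is only a heuristic; it does not rule out that all the curlicues avoid a fixed neighbourhood of the prescribed $z$. Moreover, the denominators $q$ supplied by the Diophantine hypothesis are an uncontrolled sequence of (possibly highly composite) integers, so Weil-type bounds of the shape $\max_{L\le q}|\Sigma_L(R)|\ll q^{1/2}\log q$ are not available in general, and your error term in the partial-summation step is not justified either.

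Forrest's actual argument avoids these number-theoretic difficulties: he realises $S_d(\vx;N)$ as the Birkhoff sums of a $\C$-valued cocycle over an irrational toral rotation and uses recurrence/minimality of the skew product (together with a return-time argument exploiting the hypothesis $\lowlim q^{d}\|qx_d\|<\infty$) to force the orbit to approach any prescribed $z$. Your closing sentence correctly identifies this as ``the clean way to organise these estimates'', but as written your proposal defers precisely the step that carries the weight, so it is an outline rather than a proof.
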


With the weaker condition for the leading term $x_d$, Forrest~\cite{Forrest1} gives  the following
result.

\begin{theorem}{\rm(Forrest~\cite[Proposition~13]{Forrest1})}
\label{thm:Forrest-dense-zero}
Suppose that $d\ge 3$ and $x_d$ is irrational such that 
$$
\lowlim _{q\to  \infty}q^{d-1/2}\| q x_d\|<\infty,
$$ 
then for a dense $G_\delta$ set of  $(x_1, \ldots, x_{d-1})\in \T_{d-1}$ the partial sums 
$$
S_d(\vx; N), \quad N=1, 2, \ldots,
$$
is dense at $0$ in  $\mathbb{C}$.
\end{theorem}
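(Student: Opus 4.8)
The plan is a Baire category argument, of the kind that underlies all the results surveyed above. Keep $x_d$ fixed, with $\lowlim_{q\to\infty}q^{d-1/2}\|qx_d\|<\infty$, and let
$$
\cZ=\{(x_1,\dots,x_{d-1})\in\T_{d-1}:\ \lowlim_{N\to\infty}|S_d(\vx;N)|=0\}
$$
be the set we want to understand; the claim is that $\cZ$ contains a dense $\cG_\delta$ set (in fact it will be one). Since $\lowlim_{N\to\infty}|S_d(\vx;N)|=0$ holds precisely when $|S_d(\vx;N)|<1/k$ is solvable in $N\in\N$ for every $k$, we have $\cZ=\bigcap_{k\ge1}\cU_k$ with
$$
\cU_k=\bigcup_{N\in\N}\{(x_1,\dots,x_{d-1})\in\T_{d-1}:\ |S_d(\vx;N)|<1/k\}.
$$
For each fixed $N$ the map $(x_1,\dots,x_{d-1})\mapsto S_d(\vx;N)$ is continuous, so each $\cU_k$ is open and $\cZ$ is a $\cG_\delta$ set. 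As $\T_{d-1}$ is a compact metric space, the Baire category theorem reduces the whole statement to showing that each $\cU_k$ is dense.

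So fix $k\ge1$, a target point $(y_1,\dots,y_{d-1})\in\T_{d-1}$ and $\delta>0$; the task is to produce $(x_1,\dots,x_{d-1})$ within $\delta$ of the target, together with some $N$, for which $|S_d(\vx;N)|<1/k$. Using the hypothesis, pick $q$ — as large as we please — with $\|qx_d\|\le Cq^{1/2-d}$, and write $x_d=b/q+\theta$, so $|\theta|=\|qx_d\|/q\le Cq^{-1/2-d}$. Choose integers $a_1,\dots,a_{d-1}$ with $|a_i/q-y_i|<\delta$ and set $x_i=a_i/q$; once $q>1/\delta$ this is possible, with about $2\delta q$ admissible values of each $a_i$. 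With $\widetilde P(n)=a_1n+\dots+a_{d-1}n^{d-1}+bn^d\in\Z[n]$ and $\phi(n)=x_1n+\dots+x_dn^d$ we have $\e(\phi(n))=\e(\widetilde P(n)/q)\,\e(\theta n^d)$, where for $n\le q$ the unimodular factor $\e(\theta n^d)$ varies slowly: its total variation over $1\le n\le q$ is at most $2\pi|\theta|q^d\le 2\pi Cq^{-1/2}$. The engine of the construction is to use the freedom in $a_1,\dots,a_{d-1}$ to force the complete sum $\sum_{n=1}^{q}\e(\widetilde P(n)/q)$ to vanish — for instance by arranging $\widetilde P$ (within the allowed ranges of coefficients) to be a permutation polynomial modulo $q$, so that $\sum_{n=1}^{q}\e(\widetilde P(n)/q)=\sum_{m=0}^{q-1}\e(m/q)=0$ — or at least to be small. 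Granting that, Abel summation against the twist $\e(\theta n^d)$, using a Weil-type bound $\sum_{n\le M}\e(\widetilde P(n)/q)=O_d(q^{1/2}\log q)$ for incomplete polynomial sums together with the total-variation estimate above, yields $|S_d(\vx;q)|=O_d(\log q)$.

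The hard part is exactly this leftover $O_d(\log q)$: a single window of length $q$ does not by itself beat $1/k$, and concatenating windows (taking $N$ a multiple of $q$) only makes matters worse, so genuine extra cancellation must be extracted. There are two natural routes. The first is a degree-$d$ approximate functional equation — the higher analogue of the Hardy--Littlewood theta transformation underlying Theorems~\ref{thm:HL} and~\ref{thm:FJK} for $d=2$ — which, after the substitution $x_d=b/q+\theta$, rewrites $S_d(\vx;q)$ as a Weyl sum of much shorter length, of order $|\theta|q^d=O(q^{-1/2})$, times a Gauss-sum factor, plus an error that is genuinely $o(1)$ as $q\to\infty$; the exponent $d-1/2$ in the hypothesis is precisely what forces this error to decay. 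The second is Forrest's reformulation~\cite{Forrest1}: one views the partial sums $S_d(\vx;N)$ as the values of a cocycle over a nilrotation on $\T_{d-1}\times\C$, whereupon the desired conclusion becomes recurrence of this skew product near $0$, with $\lowlim_{q\to\infty}q^{d-1/2}\|qx_d\|<\infty$ governing the recurrence rate. In either case the point of $\cU_k$ is produced not by one perturbation but as the limit of a nested sequence of perturbations attached to a sequence of denominators drawn from the Diophantine-good set, with the errors telescoped to $0$; and in either route one must still supply the arithmetic input that the complete (or incomplete) exponential sum can be made small using coefficients constrained to short intervals, which is itself the delicate point. Once each $\cU_k$ is known to be open and dense, Baire's theorem furnishes a dense $\cG_\delta$ set inside $\cZ$, which is the assertion.
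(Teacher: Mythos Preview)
The paper does not prove this statement: Theorem~\ref{thm:Forrest-dense-zero} is quoted in the survey Section~1.2 as a result of Forrest~\cite[Proposition~13]{Forrest1}, with no proof supplied. So there is no ``paper's own proof'' to compare against; one can only judge your proposal on its internal merits and against Forrest's original argument.

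On those merits, the proposal is not a proof but an outline that stops precisely at the hard point, and you say so yourself. Your Baire setup and the reduction to density of each $\cU_k$ are fine. The construction of a rational approximation $x_d=b/q+\theta$ with $|\theta|\ll q^{-d-1/2}$, the choice $x_i=a_i/q$, and the Abel summation giving $|S_d(\vx;q)|=O_d(\log q)$ are all reasonable, \emph{provided} the ingredients hold. But (i) you never justify that within the short windows $|a_i/q-y_i|<\delta$ one can choose $a_1,\dots,a_{d-1}$ making $\widetilde P$ a permutation polynomial modulo $q$ (this is the ``arithmetic input'' you defer), and more seriously (ii) the Weil-type bound $\sum_{n\le M}\e(\widetilde P(n)/q)=O_d(q^{1/2}\log q)$ you invoke requires arithmetic control on $q$ (for instance $q$ prime), whereas the Diophantine hypothesis furnishes infinitely many good $q$ with no such structure.

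Even granting (i) and (ii), you arrive only at $O_d(\log q)$, which does not beat $1/k$. Your two proposed escape routes---a degree-$d$ approximate functional equation, or Forrest's cocycle-recurrence mechanism---are named but not executed, and each is substantial. In particular, for $d\ge3$ there is no known analogue of the theta transformation that rewrites $S_d(\vx;q)$ as a shorter Weyl sum with $o(1)$ error in the way you sketch; this is precisely why the higher-degree case is harder than $d=2$. The cocycle route \emph{is} what Forrest actually does, but it is not a trivial repackaging of the Baire argument: it uses the specific structure of the skew-product dynamics and the Diophantine hypothesis to produce genuine recurrence to $0$, not merely an $O(\log q)$ bound. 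As written, your proposal identifies the right framework but leaves the decisive step---getting from $O(\log q)$ down to $o(1)$---as an unfilled gap.
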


One may also conjecture that for almost all $\vx\in \T_d$  the sequence $S_d(\vx; N), N\in \N$ is dense in $\CC$, see Conjecture~\ref{con:dense} below.

\subsection{Main results} 
We now show that there are many very small  quadratic Weyl sums and monomial sums for arbitrary degree.  More precisely we define the ``zero sets'' of   Weyl sums and monomial sums  as 
$$
\cZ_{d,W}=\{\vx\in \Tor:~\lowlim_{N\to \infty} |S_d(\vx; N)|=0\},
$$
and
$$
\cZ_{d,G}=\{x\in \T:~\lowlim_{N\to \infty} |G_d(x; N)|=0\}.
$$
We now show that  $\cZ_{d,W}$ and $\cZ_{d,G}$  are quite massive. 

\begin{theorem} 
\label{thm:zero}
For any integer $d\ge 2$,  the sets  $\cZ_{d,W}$  and $ \cZ_{d,G}$  contain dense $\cG_{\delta}$ sets in $\Tor$ and $\T$, respectively.
\end{theorem}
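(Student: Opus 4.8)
The plan is to prove the stronger statement that $\cZ_{d,W}$ and $\cZ_{d,G}$ are \emph{themselves} dense $\cG_{\delta}$ sets, which of course suffices. The argument has a topological half and an arithmetic half.

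First, each set is a $\cG_{\delta}$. For fixed $N$ the map $\vx\mapsto |S_d(\vx;N)|$ is continuous on $\Tor$, being the modulus of a finite sum of characters, so every sublevel set $\{\vx\in\Tor:|S_d(\vx;N)|<1/k\}$ is open. For any sequence of non-negative reals $(a_N)$ one has $\lowlim_{N\to\infty}a_N=0$ exactly when for all $k,M\in\N$ there is some $N\geqslant M$ with $a_N<1/k$; hence
$$
\cZ_{d,W}=\bigcap_{k\geqslant 1}\ \bigcap_{M\geqslant 1}\ \bigcup_{N\geqslant M}\{\vx\in\Tor:|S_d(\vx;N)|<1/k\},
$$
a countable intersection of open sets, and likewise for $\cZ_{d,G}$ with $G_d(x;N)$ in place of $S_d(\vx;N)$.

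Second, each set is dense, because it contains an explicit dense family of rational points. If $\vx=\va/q$ with $\va=(a_1,\ldots,a_d)\in\Z^d$, the summand $\e((a_1 n+\ldots+a_d n^d)/q)$ is periodic in $n$ with period $q$, so splitting $\{1,\ldots,qL\}$ into $L$ blocks of $q$ consecutive integers gives $S_d(\va/q;qL)=L\,\cS_d(\va;q)$ for every $L\in\N$, where $\cS_d(\va;q)=\sum_{r=1}^{q}\e((a_1 r+\ldots+a_d r^d)/q)$ is the complete sum. Now take $q\equiv 2\pmod 4$, say $q=2m$ with $m$ odd; writing $r\bmod 2m$ as the pair $(r\bmod 2,\ r\bmod m)$ and $1/(2m)\equiv v/2+u/m\pmod 1$ with $v$ odd, one sees that $\cS_d(\va;q)$ factors as $\big(1+(-1)^{a_1+\ldots+a_d}\big)$ times the analogous complete sum modulo $m$. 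Hence if $a_1+\ldots+a_d$ is odd then $\cS_d(\va;q)=0$, so $S_d(\va/q;qL)=0$ for all $L$, and therefore $\lowlim_{N\to\infty}|S_d(\va/q;N)|=0$, i.e. $\va/q\in\cZ_{d,W}$. These points are dense in $\Tor$: given a target point and $q=2m$ with $m$ odd and large, pick each numerator to be an integer within distance $1$ of $q$ times the corresponding coordinate, choosing the parity of the last numerator so that $a_1+\ldots+a_d$ becomes odd, and let $m\to\infty$. The monomial case runs identically, with $\cS_d(\va;q)$ replaced by $\sum_{r=1}^{q}\e(a r^d/q)$, whose modulo-$2$ factor is $1+(-1)^{a}$; thus $q\equiv 2\pmod 4$ together with $a$ odd forces $G_d(a/q;qL)=0$ for all $L$, and the rationals $a/q$ of this form are dense in $\T$. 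Combining the two halves completes the proof.

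I do not expect a real obstacle. The only points needing care are the bookkeeping in the Chinese Remainder factorisation of $\cS_d(\va;q)$ (and the observation that it requires no coprimality between $\va$ and $q$) and the elementary check that the vanishing rationals approximate arbitrary torus points under the parity constraint. The single conceptual idea is that ``$\lowlim=0$'' is a $\cG_{\delta}$ condition and that one can make the sum vanish \emph{identically} along the progression $N=qL$ by the completely elementary choice $q\equiv 2\pmod 4$; no Diophantine approximation, Baire category theorem, or analytic input is used.
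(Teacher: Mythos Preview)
Your proof is correct and takes a genuinely different route from the paper's.

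The paper does not observe that $\cZ_{d,W}$ and $\cZ_{d,G}$ are themselves $\cG_\delta$; instead it builds an explicit $\cG_\delta$ subset by taking, for each prime $p$, a small open neighbourhood of certain rational points at which the complete sum vanishes, and then intersecting over shrinking tolerances. The vanishing points it uses are case--dependent: for $\cZ_{2,W}$ it takes $(a/4p,b/4p)$ with $\gcd(ab,2p)=1$ (Lemma~2.2(ii)); for $\cZ_{2,G}$ it takes $b/2p$ with $\gcd(b,2p)=1$ (Lemma~2.2(i)); for $\cZ_{d,G}$, $d\ge 3$, it takes $b/p$ with $\gcd(d,p-1)=1$ (Lemma~2.3); and for $\cZ_{d,W}$, $d\ge 3$, it uses the permutation $n\mapsto n^d$ on $\F_p$ to land on the curve $p^{-1}\bigl(\binom{d}{1}\lambda,\ldots,\binom{d}{d}\lambda^d\bigr)$, and then invokes a nontrivial equidistribution lemma (\cite[Lemma~2.5]{ChSh1}) to show that this curve meets every small box in $\F_p^d$, hence the points are dense. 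Your argument replaces all of this by a single uniform device: the CRT factorisation of the complete sum modulo $q=2m$ with $m$ odd isolates a factor $1+(-1)^{a_1+\ldots+a_d}$ (respectively $1+(-1)^a$), so a parity condition forces vanishing along the whole arithmetic progression $N\in q\N$, and the resulting rationals are visibly dense with no appeal to any auxiliary distribution result. This is shorter, handles all $d\ge 2$ and both families at once, and avoids the external input from~\cite{ChSh1}. The paper's approach, on the other hand, is set up so that the \emph{same} rational points and neighbourhoods feed directly into the Hausdorff--dimension estimate of Theorem~1.13, which needs quantitative control of the $\delta_{p,\eta}$ via incomplete--sum bounds; your purely topological argument does not immediately give that.
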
 

We remark that the claim of Theorem~\ref{thm:zero} for  $\cZ_{d,W}$ can perhaps be derived from 
Theorem~\ref{thm:Forrest-dense-zero}, however we  prove it via  a completely different method, which also 
applies to  $ \cZ_{d,G}$ and perhaps to some other similar sets. 

Note  that if $\cA, \cB$ contain dense $\cG_{\delta}$ sets, then their intersection $\cA\cap \cB$  also contains a dense $\cG_{\delta}$ set. 

We remark that the proof of~\cite[Theorem~1.3]{ChSh1}  implies that for  any integer $d\ge 2$ the set 
$\Xi_{d}$ contains a dense $\cG_{\delta}$ set, where  
\begin{align*}
\Xi_{d}= \bigl\{\vx\in \Tor:~\forall \varepsilon>0, \,\uplim_{N\to  \infty} \frac{|S_d(\vx; N)|}{N^{1-\varepsilon}} =\infty \bigr\}.
\end{align*}
Meanwhile, the proof of~\cite[Theorem~1.6]{ChSh1} implies that the similar statement also holds for the monomial sums $G_d(x; N)$ with any integer $d\ge 2$.   Therefore  for integer $d\ge 2$, we conclude that there are dense $\cG_{\delta}$ sets of  $\vx\in \T_d$ and $x\in T$  with  
\begin{equation}
\label{eq:zero-accumulate}
\lowlim_{N\to \infty} |S_d(\vx; N)|=0 \mand \lowlim_{N\to \infty} |G_d(x; N)|=0,
\end{equation}
and for any $\varepsilon>0$ 
\begin{equation}
\label{eq:infinity-accumulate}
\uplim_{N\to \infty} \frac{|S_d(\vx; N)|}{N^{1-\eps}}=\infty \mand  \uplim_{N\to \infty} \frac{|G_d(x; N)|}{N^{1-\eps}}=\infty,
\end{equation}
 respectively. 

 It is natural to ask about the Lebesgue measure and the  Hausdorff dimensions of the sets  $\cZ_{d,W}$ and $\cZ_{d,G}$.

\begin{definition}[Hausdorff dimension]
The  Hausdorff dimension of a set $\cA\subseteq \R^{d}$ is defined as 
\begin{align*}
\dim_H \cA=\inf\Bigl\{s>0:~\forall \,   \eps>0,&~\exists \, \{ \cU_i \}_{i=1}^{\infty}, \ \cU_i \subseteq \R^{d},\  \text{such that }  \\
  &  \cA\subseteq \bigcup_{i=1}^{\infty} \cU_i \text{ and } \sum_{i=1}^{\infty}\(\diam\cU_i\)^{s}<\eps \Bigr\}.
\end{align*}
\end{definition} 

For the properties of the Hausdorff dimension and its applications we refer to~\cite{Falconer, Mattila1995}. 

We now give lower bounds on the Hausdorff dimensions of $  \cZ_{d,W}$ and   $\cZ_{d,G}$.

\begin{theorem} 
\label{thm:H-dim}
For any integer $d\ge 2$,  we have
$$ 
\dim_H \cZ_{d,W} \ge 6/5 \mand 
\dim_H \cZ_{d,G} \ge 4/(2d+1).
$$
\end{theorem}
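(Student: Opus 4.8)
The plan is to exhibit, for each of the two sets, an explicit subset cut out by Diophantine approximation conditions on which the relevant partial sums can be shown to be small along a subsequence of $N$, and then to bound the Hausdorff dimension of that subset from below by a Cantor--set (mass distribution) argument. First I would reduce the Weyl--sum estimate to the quadratic case: since $S_d((x_1,x_2,0,\ldots,0);N)=S_2((x_1,x_2);N)$ for all $N$, we have $\cZ_{2,W}\times\{0\}^{d-2}\subseteq\cZ_{d,W}$ and hence $\dim_H\cZ_{d,W}\ge\dim_H\cZ_{2,W}$, so it suffices to prove $\dim_H\cZ_{2,W}\ge 6/5$ together with $\dim_H\cZ_{d,G}\ge 4/(2d+1)$.

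The analytic heart is a lemma of the following shape. I would work with moduli $q\equiv 2\pmod 4$ with $q/2$ squarefree, and with integers $a$ odd, $\gcd(a,q)=1$, and $b$ even. Completing the square modulo the odd part $q/2$ and using $1+(-1)^a=1+(-1)^{a+b}=0$, the complete sums $\sum_{n=1}^q\e(an^d/q)$ and $\sum_{n=1}^q\e((an^2+bn)/q)$ both vanish, while completing the incomplete sums and invoking the Weil bound for complete polynomial exponential sums modulo $q/2$ gives square--root cancellation $|\sum_{n=1}^m\e(an^d/q)|\ll q^{1/2+o(1)}$ and $|\sum_{n=1}^m\e((an^2+bn)/q)|\ll q^{1/2+o(1)}$, uniformly for $1\le m\le q$. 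Feeding these into partial summation after a Taylor expansion of $\e(\beta n^d)$, respectively of $\e(\beta n^2+\gamma n)$ (the telescoping of $n^{dj}$, resp.\ $n^{2j+k}$, against the uniformly bounded partial sums is clean), I expect to obtain: if $x=a/q+\beta$ with $|\beta|q^d\le 1$ then $|G_d(x;q)|\ll|\beta|\,q^{d+1/2+o(1)}$; and if $x_2=a/q+\beta$, $x_1=b/q+\gamma$ with $|\beta|q^2\le 1$, $|\gamma|q\le 1$, then $|S_2((x_1,x_2);q)|\ll\bigl(|\beta|\,q^{5/2}+|\gamma|\,q^{3/2}\bigr)q^{o(1)}$. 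Consequently, if $\|qx\|\le q^{-(d-1/2)-\eps}$ for infinitely many admissible $q$ with the nearest integer $a$ to $qx$ odd and coprime to $q$, then $G_d(x;q)\to 0$ along these $q$, so $x\in\cZ_{d,G}$; and if $\|qx_2\|\le q^{-3/2-\eps}$ and $\|qx_1\|\le q^{-1/2-\eps}$ for infinitely many admissible $q$ with prescribed parities of the nearest integers, then $\lowlim_{N\to\infty}|S_2((x_1,x_2);N)|=0$, so $(x_1,x_2)\in\cZ_{2,W}$.

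It then remains to bound from below the Hausdorff dimensions of these two limsup sets. For $\cZ_{d,G}$ this is a Jarn\'ik--Besicovitch statement with denominators restricted to a positive--density set: the standard nested--interval construction, with the admissible denominators growing as fast as we wish, yields dimension $\ge 2/\bigl(1+(d-1/2)+\eps\bigr)$ for every $\eps>0$, and letting $\eps\to 0$ gives $\dim_H\cZ_{d,G}\ge 2/(d+1/2)=4/(2d+1)$. For $\cZ_{2,W}$ I would run an anisotropic Cantor construction in $\T_2$: at stage $k$ the pieces are boxes of dimensions $q_k^{-3/2-\eps}\times q_k^{-5/2-\eps}$, and inside each we place the roughly $q_{k+1}^{2}q_k^{-4-2\eps+o(1)}$ boxes of dimensions $q_{k+1}^{-3/2-\eps}\times q_{k+1}^{-5/2-\eps}$ centred at the admissible fractions $(b/q_{k+1},a/q_{k+1})$, taking $q_{k+1}=q_k^{\gamma}$ with $\gamma$ large. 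Equipping the resulting set with the natural (uniform) measure $\mu$ and estimating $\mu(B(z,r))$ in each scale regime --- the decisive one being $1/q_{k+1}\le r\le q_k^{-5/2-\eps}$, where $\mu(B(z,r))\ll r^{2}q_{k+1}^{2}\mu_{k+1}$ --- the mass distribution principle should give dimension $\ge 2-\frac{2+2\eps}{5/2+\eps}$, and letting $\gamma\to\infty$ and $\eps\to 0$ gives $\dim_H\cZ_{2,W}\ge 6/5$.

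The main obstacle I foresee is this last step: the mass--distribution estimate $\mu(B(z,r))\ll r^{s}$ has to be verified uniformly over all radii $r$ for the anisotropic construction, in which the boxes become ever flatter, and this requires a careful case analysis over the scale ranges determined by consecutive box side--lengths (and a check that the regimes other than the decisive one do not force a smaller exponent). The number--theoretic inputs --- vanishing of the complete sums and Weil--type cancellation for the incomplete ones --- are classical, and the partial summation, the passage to the Diophantine conditions, and the Jarn\'ik construction behind the $\cZ_{d,G}$ bound are routine.
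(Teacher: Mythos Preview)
Your proposal is correct and would prove the theorem, and on the analytic side it is essentially the paper's argument: complete sums vanish at the chosen rational points, incomplete sums have square--root cancellation, and a Taylor--expansion/partial--summation continuity lemma transfers this to nearby real points. Your choice of moduli $q\equiv 2\pmod 4$ (so that the mod--$2$ factor kills the complete sum) is a minor variant of the paper's choice of $2p$, $4p$, or $p$ with $\gcd(d,p-1)=1$, and both lead to the same Jarn\'ik--Besicovitch bound $4/(2d+1)$ for $\cZ_{d,G}$.

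The one genuine difference is in the treatment of $\cZ_{2,W}$. You impose the \emph{anisotropic} condition $\|qx_2\|\le q^{-3/2-\eps}$, $\|qx_1\|\le q^{-1/2-\eps}$, which is exactly the set the paper calls $\fR$ in Section~4.1, and you then propose a weighted Cantor/mass--distribution construction to show $\dim_H\fR\ge 6/5$. The paper instead uses the \emph{isotropic} subset $\cP^*$ in which \emph{both} coordinates satisfy the stronger condition $\|qx_i\|\le q^{-3/2-\eps}$; this costs nothing (since $\cP^*\subseteq\fR\subseteq\cZ_{2,W}$) and allows a direct appeal to the classical Jarn\'ik--Besicovitch theorem in $\T_2$, giving $\dim_H\cP^*=3/(5/2)=6/5$ with no further work. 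In other words, your anisotropic route is valid but strictly harder for the same payoff: the authors themselves note that they do not know how to determine $\dim_H\fR$, yet the bound $6/5$ already follows from its isotropic subset. If you want to keep your framework, the quickest fix is simply to strengthen the condition on $x_1$ to match that on $x_2$ and cite Jarn\'ik--Besicovitch; your separate continuity estimate $|S_2(\vx;q)|\ll(|\beta|q^{5/2}+|\gamma|q^{3/2})q^{o(1)}$ remains true and the stronger hypothesis on $x_1$ is harmless.
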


The above lower bounds on the Hausdorff dimension are perhaps  far from the truth but we do not know how to improve them.

\subsection{Outline of the method} We first show that the quadratic Weyl sums and monomial sums  take  small values at some  rational points and thus in their neighbourhood. Secondly, we apply some results and tools from metric number theory to show that the neighbourhoods of these rational points are large sets in the sense of  topology and Hausdorff dimension.

We conclude this paper with a brief outline of some  further directions of research and some additional 
ideas which may lead to improvements of our results. In particular, we formulate several conjectures on the
behaviour of Weyl sums.

\section{Exponential sums}

\subsection{Notation and conventions}

As usual, the notations $U = O(V )$, 
$U \ll V$ and $ V\gg U$  are equivalent to $|U|\leqslant c|V| $ for some positive constant $c$.
Throughout the paper, all implied constants are absolute. 
 

 The letter $p$ always denotes a prime number. 
 
\subsection{Complete and incomplete Gaussian  sums} 
For a prime $p$,  let $\F_p$ denote the finite field of $p$ elements, which we identify with the set
$\{0, \ldots, p-1\}$. Furthermore, let 
$$\e_m(z)=\e(z/m).
$$

We also need the following bound for the incomplete Gaussian sums, see~\cite{K}.

\begin{lemma} 
\label{lem:K}
For each prime $p$ and integer $b\neq 0$ we have 
$$
\max_{1\le N\le p}\left |\sum_{n=1}^N\e_p(bn^2)\right | \ll \sqrt{p}.
$$
\end{lemma}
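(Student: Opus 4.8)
The plan is to reduce the incomplete sum to a complete Gaussian sum over $\Z_p$ by completing the truncation with a Fourier expansion of the indicator function of the interval $[1,N]$ in $\Z_p$, and then to use the exact evaluation of complete Gaussian sums (Gauss's classical computation), which has modulus $\sqrt{p}$. Concretely, first I would write
$$
\sum_{n=1}^N \e_p(bn^2) = \sum_{n=0}^{p-1} \e_p(bn^2) \cdot \frac{1}{p}\sum_{h=0}^{p-1} \sum_{m=1}^{N} \e_p(h(n-m)),
$$
using the orthogonality relation $\frac1p\sum_{h=0}^{p-1}\e_p(h(n-m)) = \mathbf 1_{n\equiv m}$. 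Swapping the order of summation gives
$$
\sum_{n=1}^N \e_p(bn^2) = \frac{1}{p}\sum_{h=0}^{p-1} \left(\sum_{m=1}^{N}\e_p(-hm)\right)\left(\sum_{n=0}^{p-1}\e_p(bn^2 + hn)\right).
$$

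The second inner sum is a complete quadratic Gauss sum; since $\gcd(b,p)$ may be nontrivial one splits into the case $p\mid b$ (where the bound is trivial, as the sum is at most $N\le p\ll\sqrt p$ only if... — actually here one should note $b\ne 0$ does not preclude $p\mid b$, so treat that case separately and directly, or simply absorb it) and $p\nmid b$. For $p\nmid b$, completing the square $bn^2+hn = b(n + h\overline{2b})^2 - h^2\overline{4b}$ and shifting the summation variable shows the inner sum equals $\e_p(-h^2\overline{4b})\,g_p$, where $g_p=\sum_{n=0}^{p-1}\e_p(n^2)$ is the standard Gauss sum with $|g_p|=\sqrt p$. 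Hence
$$
\left|\sum_{n=1}^N \e_p(bn^2)\right| \le \frac{\sqrt p}{p}\sum_{h=0}^{p-1}\left|\sum_{m=1}^{N}\e_p(-hm)\right|.
$$
The geometric sum over $m$ is bounded by $\min(N, 1/(2\|h/p\|))$, and $\sum_{h=0}^{p-1}\min(N,1/(2\|h/p\|)) \ll p\log p$; dividing by $p$ would only give the weaker bound $\sqrt p\,\log p$, so a small refinement is needed.

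The main obstacle is precisely removing that stray logarithmic factor to reach the clean $\ll\sqrt p$. One standard fix is to invoke a sharper, log-free form of the completion technique: van der Corput's or Weyl's device applied directly to $\sum_{n=1}^N\e_p(bn^2)$, squaring out to a linear sum $\sum_{|k|<N}\e_p(2bk\cdot(\text{shift}))\min(N,\ldots)$ and summing the geometric progressions without a completion step — this yields $\big|\sum_{n=1}^N\e_p(bn^2)\big|^2 \ll p$ after bounding $\sum_{k}\min(N, 1/\|2bk/p\|)\ll p$ (the terms with $p\mid 2bk$ contribute $\ll N\cdot(N/p)\le p$ and the rest $\ll$ the number of $k$'s which is $\ll N\le p$). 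Alternatively, one cites the explicit result of the referenced work~\cite{K}, where this precise estimate is proved. I would present the Weyl-differencing argument, since it is self-contained, avoids the Gauss sum evaluation entirely, and directly produces the square-root bound uniformly in $1\le N\le p$ and in $b\not\equiv 0$; the case $p\mid b$, $b\ne0$ is then handled by noting $\e_p(bn^2)=1$ so the sum is $N$ — but this is only $\ll\sqrt p$ when $N\ll\sqrt p$, so in fact one must assume (as is implicit from the application) that $p\nmid b$, or restrict attention to that case, which is the only one used downstream.
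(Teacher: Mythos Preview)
The paper does not give its own proof of this lemma; it simply cites Korolev~\cite{K}. So there is no argument to compare against, only the question of whether your sketch actually establishes the log-free bound.

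Your completion approach is carried out correctly and, as you say, yields only $\sqrt{p}\,\log p$. The gap is in your second approach. After Weyl differencing you arrive at
\[
\Bigl|\sum_{n=1}^{N}\e_p(bn^{2})\Bigr|^{2}
\;\le\;
\sum_{|k|<N}\min\!\Bigl(N,\;\tfrac{1}{\|2bk/p\|}\Bigr),
\]
and you claim the right-hand side is $\ll p$, justifying this by saying ``the rest $\ll$ the number of $k$'s which is $\ll N\le p$''. That step is wrong: for $p\nmid 2bk$ the individual term $\min(N,1/\|2bk/p\|)$ is not $O(1)$ --- it can be as large as $p/1=p$. In fact, when $2b\equiv 1\pmod p$ the residues $2bk$ for $1\le k\le N-1$ are exactly $1,\ldots,N-1$, and then
\[
\sum_{k=1}^{N-1}\min\!\Bigl(N,\tfrac{p}{k}\Bigr)
\;\gg\; p\log\!\bigl(N^{2}/p\bigr)
\qquad\text{for } N>\sqrt{p},
\]
so the sum is genuinely of size $p\log p$ in the range $\sqrt{p}<N\le p$. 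Hence Weyl differencing alone gives only $|S|\ll\sqrt{p\log p}$, the same quality as the completion bound.

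Removing the logarithm is exactly the nontrivial content of the cited result~\cite{K} (classical precursors go back to Hardy--Littlewood and Fiedler--Jurkat--K\"orner via the approximate functional equation / reciprocity for theta sums); it is not recoverable by a single Weyl step. Your proposal therefore either needs that external input --- which is precisely what the paper does --- or a substantially different self-contained argument. Your closing observation is correct, though: as literally stated the lemma fails when $p\mid b$ (the sum is then $N$), and the intended hypothesis, used everywhere downstream, is $b\not\equiv 0\pmod p$.
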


From Lemma~\ref{lem:K} we immediately  derive the  following.

\begin{lemma}
\label{lem:incompleteGauss}
For any prime  $p$ and any $a, b\in \F_p$ with $b\neq 0$ we have 
$$
\max_{1\le M, N\le p}\left|\sum_{M+1\le n\le M+N}\ep\left(an+bn^{2}\right)\right| \ll \sqrt{p}.
$$
\end{lemma}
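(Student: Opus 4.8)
The plan is to deduce Lemma~\ref{lem:incompleteGauss} from Lemma~\ref{lem:K} by completing the square in the quadratic form $an+bn^2$ modulo $p$. First I would note that since $b\neq 0$ in $\F_p$ and $p$ is prime (so $2b$ is invertible unless $p=2$, a case one handles trivially since there are only boundedly many sums), we can write
$$
an + bn^2 \equiv b\left(n + a(2b)^{-1}\right)^2 - a^2(4b)^{-1} \pmod p,
$$
where $(2b)^{-1}$ and $(4b)^{-1}$ denote inverses modulo $p$. Hence, setting $c \equiv a^2(4b)^{-1} \pmod p$, we get
$$
\sum_{M+1\le n\le M+N}\ep\left(an+bn^2\right) = \ep(-c)\sum_{M+1\le n\le M+N}\ep\left(b(n+t)^2\right),
$$
where $t$ is an integer with $t \equiv a(2b)^{-1}\pmod p$. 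Since $|\ep(-c)|=1$, it suffices to bound the shifted sum.

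Next I would perform the substitution $m = n+t$, which turns the sum into $\sum_{m\in I}\ep(bm^2)$ over an interval $I$ of length $N \le p$ whose endpoints depend on $M$ and $t$. Because $n\mapsto \ep_p(bn^2)$ is periodic modulo $p$, any sum over an interval of length $N\le p$ equals a sum of the form appearing in Lemma~\ref{lem:K} possibly split into at most two subintervals (to reduce $I$ modulo $p$ to a genuine interval inside $\{1,\dots,p\}$, or to write it as a difference of two initial segments $\sum_{n=1}^{N_2}-\sum_{n=1}^{N_1}$). Applying Lemma~\ref{lem:K} to each of the (at most two) pieces, each is $\ll \sqrt p$, and summing gives the desired bound $\ll \sqrt p$, uniformly over $M, N$ with $1\le M, N\le p$. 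The constant remains absolute since everything has been reduced to a bounded number of applications of Lemma~\ref{lem:K}.

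The only mildly delicate point — and the one I would be most careful about — is the reduction of the interval sum over $m\in I$ to the shape $\max_{1\le N\le p}\left|\sum_{n=1}^N \e_p(bn^2)\right|$ covered by Lemma~\ref{lem:K}: one must check that an arbitrary interval of length at most $p$, after reduction modulo $p$, is either a single interval in $\{1,\dots,p\}$ or the disjoint union of two such, and in the latter case express each piece as a difference of two prefixes $\sum_{n=1}^{b}-\sum_{n=1}^{a}$ of length $\le p$. This is entirely routine periodicity bookkeeping and costs only an absolute constant factor. I would therefore keep the proof to a couple of lines: complete the square, shift, invoke periodicity, and cite Lemma~\ref{lem:K}.
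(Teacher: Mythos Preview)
Your argument is correct and is precisely the intended derivation: the paper simply states that Lemma~\ref{lem:incompleteGauss} follows ``immediately'' from Lemma~\ref{lem:K}, and completing the square, shifting, and using periodicity to reduce to at most two initial segments is exactly how one cashes this out. There is nothing to add.
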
 

We emphasise that the implied constant in  Lemma~\ref{lem:incompleteGauss} is absolute. 

We now recall that the Gaussian sum modulo $2p$ or $4p$ exhibit  very different behaviour 
and sometimes vanish. 

\begin{lemma}
\label{lem:zero}
Let $p\ge 3$ and $a, b\in \Z$. 
\begin{itemize}
\item[(i)] If $\gcd(b, 2p)=1$ then 
$$
\sum_{n=1}^{2p} \e_{2p} \(bn^2\)=0.
$$
\item[(ii)]  If $\gcd(ab, 2p)=1$ then  
$$
\sum_{n=1}^{4p} \e_{4p}\(an+bn^2\)=0.
$$
\end{itemize}
\end{lemma}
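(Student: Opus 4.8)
The plan is to prove both vanishing statements by the same elementary device: exhibit a shift of the summation variable that permutes the relevant residue ring and negates every summand, so that the sum equals its own negative and is therefore $0$. The only arithmetic input is that $p$ is odd, which holds since $p\ge 3$ is prime. (In both sums the summand is periodic in $n$ with period $2p$, respectively $4p$, so it is legitimate to regard the sums as running over complete residue systems.)

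For part~(i) I would set $T=\sum_{n=1}^{2p}\e_{2p}(bn^2)$ and view the sum as over $n\in\Z/2p\Z$. Since $n\mapsto n+p$ is a bijection of $\Z/2p\Z$, replacing $n$ by $n+p$ leaves $T$ unchanged. Expanding $b(n+p)^2=bn^2+2bpn+bp^2$ gives
$$
\e_{2p}\(b(n+p)^2\)=\e_{2p}(bn^2)\,\e_{2p}(2bpn)\,\e_{2p}(bp^2),
$$
where $\e_{2p}(2bpn)=\e(bn)=1$ because $bn\in\Z$, while $\e_{2p}(bp^2)=\e(bp/2)=-1$ because $bp$ is odd (here $\gcd(b,2p)=1$ forces $b$ odd, and $p\ge 3$ is odd). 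Hence every summand is negated, so $T=-T$ and $T=0$.

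For part~(ii) I would put $U=\sum_{n=1}^{4p}\e_{4p}(an+bn^2)$, regarded as a sum over $n\in\Z/4p\Z$, and use the bijection $n\mapsto n+2p$ of $\Z/4p\Z$. From $a(n+2p)+b(n+2p)^2=(an+bn^2)+2ap+4bpn+4bp^2$ one gets
$$
\e_{4p}\(a(n+2p)+b(n+2p)^2\)=\e_{4p}(an+bn^2)\,\e_{4p}(2ap)\,\e_{4p}(4bpn)\,\e_{4p}(4bp^2),
$$
and here $\e_{4p}(4bpn)=\e(bn)=1$ and $\e_{4p}(4bp^2)=\e(bp)=1$, while $\e_{4p}(2ap)=\e(a/2)=-1$ since $\gcd(ab,2p)=1$ forces $a$ odd. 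Thus $U=-U=0$.

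There is no substantial obstacle here: these are special cases of the classical evaluation of quadratic Gauss sums to a modulus $\equiv 2\pmod 4$, respectively to a modulus divisible by $4$ carrying an extra odd linear term, and one could alternatively cite a standard reference. The only point needing a moment's care is the parity bookkeeping — checking that $bp$ is odd in part~(i) and that $a$ is odd in part~(ii); this is precisely where the hypotheses $\gcd(b,2p)=1$ and $\gcd(ab,2p)=1$ (together with $p\ge 3$) are used, and it is what makes the two half-integer exponentials evaluate to $-1$ rather than to $1$.
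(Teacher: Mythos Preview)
Your proof is correct. For part~(ii) your argument is essentially identical to the paper's: both shift $n\mapsto n+2p$ and use that $\e_{4p}(2ap)=-1$ for odd $a$ (the paper has a harmless typo, writing $4pn+4p^2$ rather than $4bpn+4bp^2$, but those terms contribute $1$ either way). For part~(i) the approaches diverge slightly: the paper simply invokes the explicit formula for Gauss sums from~\cite[Theorem~3.4]{IwKow}, whereas you give a self-contained shift argument $n\mapsto n+p$ parallel to part~(ii). Your route has the advantage of being elementary and uniform across both parts, avoiding an external citation; the paper's citation, on the other hand, situates~(i) as a known special case without further work.
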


\begin{proof} Part~(i) is well known and follows instantly from the explicit   formula for Gauss sums,  see~\cite[Theorem~3.4]{IwKow}.

For Part~(ii), we note 
\begin{align*}
\sum_{n=1}^{4p} \e_{4p}\(an+bn^2\) &= \sum_{n=1}^{4p} \e_{4p}\(a(n+2p)+b(n+2p)^2\)\\
& = \sum_{n=1}^{4p} \e_{4p}\(an+bn^2 + 2ap+4pn + 4p^2\)\\
& = \sum_{n=1}^{4p} \e_{4p}\(an+bn^2 + 2ap\)\\
& = - \sum_{n=1}^{4p} \e_{4p}\(an+bn^2\)
\end{align*}
since for an odd $a$ we have
$  \e_{4p}\(2ap\) = \exp(\pi i a) = -1$.
 The result now follows. 
\end{proof}

\subsection{Monomial  sums} 
Here we always assume that $d \ge 3$.  We  have the following analogues of Lemma~\ref{lem:zero}, which follows from the trivial observation that 
the map $x \mapsto x^d$ is a permutation of $\F_p$ provided that  $\gcd(d, p-1)=1$.

\begin{lemma}
\label{lem:zero-m} 
Let  an integer $d\ge 3$ and  a prime $p\ge 3$ be such that $\gcd(d, p-1)=1$. Then for any integer $b$ with $\gcd(b, p)=1$ one has 
$$
\sum_{n=1}^{p} \e\left(\frac{bn^d}{p}\right)=0.
$$
\end{lemma}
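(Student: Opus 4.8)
The plan is to exploit the permutation property noted just before the statement. First I would observe that, since $\gcd(d,p-1)=1$, there is an integer $d'$ with $dd'\equiv 1\pmod{p-1}$; hence on the cyclic group $\Fp^{*}$ of order $p-1$ the map $x\mapsto x^{d}$ is a bijection, with inverse $x\mapsto x^{d'}$, and since it also fixes $0$ it is a permutation of $\Fp=\{0,1,\dots,p-1\}$. Consequently, as $n$ runs through the complete residue system $\{1,2,\dots,p\}$ modulo $p$, the residues $n^{d}\bmod p$ also run exactly once through $\{0,1,\dots,p-1\}$.

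Next I would re-index the sum accordingly, obtaining
$$
\sum_{n=1}^{p}\e\!\left(\frac{bn^{d}}{p}\right)=\sum_{m=0}^{p-1}\e\!\left(\frac{bm}{p}\right).
$$
Finally, since $\gcd(b,p)=1$, the number $\e(b/p)$ is a primitive $p$-th root of unity, so the right-hand side is a complete geometric sum over all $p$-th roots of unity and therefore vanishes, by the elementary identity $\sum_{m=0}^{p-1}\zeta^{m}=0$ valid for any $\zeta\neq 1$ with $\zeta^{p}=1$.

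There is essentially no serious obstacle here: the only two points needing a word of justification are the permutation claim (immediate from $\gcd(d,p-1)=1$ together with the cyclicity of $\Fp^{*}$) and the vanishing of the complete additive character sum (classical). Both are routine, so the argument is short; if anything, the mild care required is simply to note that $\{1,\dots,p\}$ is a full set of residues modulo $p$ so that the re-indexing is legitimate.
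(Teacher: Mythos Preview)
Your argument is correct and is exactly the approach the paper indicates: since $\gcd(d,p-1)=1$, the map $x\mapsto x^{d}$ is a permutation of $\F_p$, so the sum becomes the full additive character sum $\sum_{m=0}^{p-1}\e_p(bm)=0$. There is nothing to add.
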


Using the Weil bound together with the standard 
completion technique, see~\cite[Sections~11.11 and~12.2]{IwKow} we also immediately obtain: 

\begin{lemma}
\label{lem:incompleteMonom}
For any  prime  $p$ and any $a \in \F_p\setminus\{0\}$ we have 
$$
\max_{1\le M, N\le p}\left|\sum_{M+1\le n\le M+N}\ep\left(an^{d}\right)\right| \ll \sqrt{p} \log p.
$$
\end{lemma}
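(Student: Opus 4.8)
The plan is to deduce this estimate from Weil's bound for complete exponential sums by the standard completion technique, exactly as the statement indicates. First I would dispose of the primes $p\le d$ trivially: for these the inner sum runs over at most $p\le d$ terms, so its modulus is $O(d)=O(\sqrt p\,\log p)$, the implied constant being harmless since $d$ is fixed throughout this subsection. Hence I may assume $p>d$; in particular $\gcd(d,p)=1$, since $p$ is then a prime exceeding every prime factor of $d$.

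For this range I would invoke orthogonality of the additive characters modulo $p$ to expand the indicator function of the interval $[M+1,M+N]$, obtaining for all $1\le M,N\le p$ the completion identity
$$
\sum_{M+1\le n\le M+N}\ep\left(an^{d}\right)
=\frac1p\sum_{h=0}^{p-1}\left(\sum_{M+1\le m\le M+N}\ep(-hm)\right)\sum_{n=0}^{p-1}\ep\left(an^{d}+hn\right).
$$
The inner complete sum has argument $an^{d}+hn\in\F_p[n]$, a polynomial of degree $d\ge 3$ with leading coefficient $a\not\equiv 0\pmod p$; since $\gcd(d,p)=1$ it is not of Artin--Schreier form $g^{p}-g+c$, so the Weil bound (see \cite[Theorem~11.23]{IwKow}) gives $\left|\sum_{n=0}^{p-1}\ep\left(an^{d}+hn\right)\right|\le (d-1)\sqrt p$, uniformly in $h$. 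The geometric sum over $m$ is $\ll\min\left(N,\|h/p\|^{-1}\right)$, and the classical bound $\sum_{h=0}^{p-1}\min\left(N,\|h/p\|^{-1}\right)\ll p\log p$ (the term $h=0$ contributing $N\le p$ and the remaining terms summing to $\ll p\log p$) then yields
$$
\left|\sum_{M+1\le n\le M+N}\ep\left(an^{d}\right)\right|
\ll\frac{\sqrt p}{p}\sum_{h=0}^{p-1}\min\left(N,\|h/p\|^{-1}\right)\ll\sqrt p\,\log p .
$$
This is the exact analogue of the derivation of Lemma~\ref{lem:incompleteGauss} from Lemma~\ref{lem:K}, with the Weil bound now in the role previously played by the incomplete Gauss sum estimate.

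There is no genuine obstacle here, which is precisely why the result is stated as following immediately. The single point that warrants a word of care is the applicability of the Weil bound to the twisted polynomial $an^{d}+hn$, namely the requirement $\gcd(d,p)=1$; this is exactly why the primes $p\le d$ (which include all prime divisors of $d$) are separated off at the outset and dealt with by the trivial bound. Everything else is routine bookkeeping with geometric series.
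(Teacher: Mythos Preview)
Your proof is correct and follows exactly the route the paper indicates: the paper simply states that the lemma follows from ``the Weil bound together with the standard completion technique'' and cites~\cite[Sections~11.11 and~12.2]{IwKow}, and you have written out precisely that argument in full detail. The only point worth flagging is that the implied constant you obtain is $(d-1)$ times an absolute constant, so it depends on $d$; this is unavoidable with the Weil bound, and the paper's remark that the constant is ``absolute'' should be read as ``uniform in $p,a,M,N$'' with $d$ fixed, which is consistent with your treatment.
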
 

We again emphasise that the implied constant in  Lemma~\ref{lem:incompleteMonom} is absolute.

\subsection{Continuity of Weyl sums}

We present our next result in a much more general form than we need for 
purpose, but we believe in this generality it may have other applications.

\begin{lemma}   
\label{lem:con gen} Let integer $N \ge 1$ and a vector $\vx \in \Tor$ be such that 
for any $M \le N$ we have 
$$
S_d(\vx; M) \ll \kappa M^\alpha + K
$$
for some real non-negative $\alpha$,  $\kappa$ and $K$.  Then for any  positive  $\tau = O(1)$ and  $\vy \in \Tor$ 
with 
$$
0\le y_i-x_i <\tau N^{-i}, \qquad i =1, \ldots, d,
$$
we have 
$$
S_d(\vy; N) - S_d(\vx; N)\ll \tau\( \kappa N^\alpha + K\),
$$
where the implied constant is absolute. 
\end{lemma}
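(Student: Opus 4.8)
The plan is to use summation by parts (Abel summation) to exploit the partial-sum bound on $S_d(\vx;M)$ while handling the phase difference between $\vy$ and $\vx$. Write $S_d(\vy;N)-S_d(\vx;N) = \sum_{n=1}^N \e\(x_1n+\ldots+x_dn^d\)\bigl(\e(\delta(n))-1\bigr)$, where $\delta(n)=(y_1-x_1)n+\ldots+(y_d-x_d)n^d$. The key observation is that for $n\le N$ each term $(y_i-x_i)n^i < \tau N^{-i} n^i \le \tau$, so $\delta(n) = O(\tau)$ with $\delta$ nonnegative and increasing in $n$; hence $|\e(\delta(n))-1| \ll \tau$ uniformly, and moreover $\e(\delta(n))-1$ has bounded variation in $n$ of total size $O(\tau)$ (since $\delta(n)$ is monotone and stays in an interval of length $O(\tau)$, the curve $\e(\delta(n))$ traverses an arc of length $O(\tau)$).

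Concretely, I would set $a_n = \e\(x_1n+\ldots+x_dn^d\)$ and $b_n = \e(\delta(n))-1$, so that the difference is $\sum_{n=1}^N a_n b_n$. Abel summation gives
$$
\sum_{n=1}^N a_n b_n = S_d(\vx;N) b_N + \sum_{n=1}^{N-1} S_d(\vx;n)(b_n - b_{n+1}).
$$
Using the hypothesis $|S_d(\vx;n)| \ll \kappa n^\alpha + K$ for all $n\le N$ and $|b_N| \ll \tau$, the first term is $\ll \tau(\kappa N^\alpha + K)$. For the sum, bound $|S_d(\vx;n)| \ll \kappa N^\alpha + K$ (valid since $n\le N$ and $\alpha \ge 0$) and pull this out, leaving $\sum_{n=1}^{N-1} |b_n - b_{n+1}|$. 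Now $|b_n - b_{n+1}| = |\e(\delta(n)) - \e(\delta(n+1))| \ll \delta(n+1)-\delta(n)$ because the $b_n$ trace an arc, and since $\delta$ is monotone this telescopes: $\sum_{n=1}^{N-1}(\delta(n+1)-\delta(n)) = \delta(N)-\delta(1) \le \delta(N) = O(\tau)$. Combining, the whole expression is $\ll \tau(\kappa N^\alpha+K)$, as claimed.

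The main point requiring a little care — though it is not really an obstacle — is justifying the two elementary estimates $|\e(t)-1|\ll |t|$ for $t = O(1)$ and $|\e(s)-\e(t)| \ll |s-t|$ for $s,t$ in a bounded set, and confirming that $\delta(n)$ is genuinely monotone nondecreasing in $n$: this holds because each coefficient $y_i - x_i \ge 0$ and each $n^i$ is increasing, so $\delta$ is a sum of nondecreasing nonnegative functions. The hypothesis $\tau = O(1)$ is what keeps $\delta(n)$ in a bounded range so that the arc-length comparison $|\e(s)-\e(t)|\ll|s-t|$ applies with an absolute constant. One should also note the telescoping bound uses only $\delta(N) - \delta(1) \le \delta(N) \ll \tau$, which follows from $\delta(N) = \sum_i (y_i-x_i)N^i < \sum_i \tau N^{-i} N^i = d\tau \ll \tau$. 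Since all constants in these steps are absolute, so is the implied constant in the conclusion.
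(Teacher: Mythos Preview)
Your argument is correct. Both you and the paper start from
\[
S_d(\vy;N)-S_d(\vx;N)=\sum_{n=1}^{N}\e\!\bigl(x_1n+\cdots+x_dn^d\bigr)\bigl(\e(\delta(n))-1\bigr),
\]
and both ultimately rely on Abel summation together with the monotonicity of $\delta(n)$, but the routes diverge after that. The paper expands $\e(\delta(n))=\sum_{k\ge 0}(2\pi i\,\delta(n))^{k}/k!$, obtaining for each $k\ge 1$ a sum $\sigma_k=\sum_{n\le N}\e(\cdots)\,\delta(n)^k$, applies partial summation to each $\sigma_k$ separately (with a further summation by parts inside to handle the $\kappa M^{\alpha}$ term), and then resums the resulting geometric-type bound over $k$. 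Your approach bypasses the Taylor expansion entirely: you apply Abel summation once to $b_n=\e(\delta(n))-1$, and control the total variation $\sum_{n<N}|b_n-b_{n+1}|$ in one stroke via the Lipschitz bound $|\e(s)-\e(t)|\le 2\pi|s-t|$ and the telescoping $\sum_{n<N}(\delta(n+1)-\delta(n))=\delta(N)-\delta(1)\le d\tau$. This is strictly shorter and more transparent for the lemma as stated. The paper's decomposition does retain the finer weight $\kappa M^{\alpha}$ (rather than replacing it by $\kappa N^{\alpha}$) during the intermediate steps, which could in principle yield savings for other shapes of the hypothesis, but for the present conclusion nothing is lost. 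One small remark: your final sentence asserts that all constants are absolute, but note that the bound $\delta(N)<d\tau$ introduces a factor of $d$; the paper's proof has the same dependence, so this is not a discrepancy between the two arguments.
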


\begin{proof}
Let  $\delta_i = y_i - x_i$,  $i =1, \ldots, d$. For each $n\in \N$ we have 
\begin{align*}
\e\(y_1 n+\ldots +y_d n^{d} \)&=\e\(x_1 n+\ldots +x_d n^{d} \)\e(\delta_1n+\ldots + \delta_dn^d)\\
&=\e\(x_1 n+\ldots +x_d n^{d} \)\\
& \qquad \qquad \qquad \sum_{k=0}^{\infty} \frac{(2\pi i (\delta_1n+\ldots + \delta_dn^d))^{k}}{k!}.
\end{align*}

It follows that 
\begin{equation} \label{eq:diff}
\begin{split}
S_d(\vy; N)- S_d(\vx; N) &  = \sum_{k=1}^{\infty} \sum_{n=1}^{N} \e\(x_1 n+\ldots +x_d n^{d} \) \\
& \qquad \qquad \qquad 
\frac{(2\pi i  (\delta_1n+\ldots + \delta_dn^d))^{k}}{k!}.
\end{split}
\end{equation}
For each $k\in \N$ we now turn to the estimate 
$$
\sigma_k =\sum_{n=1}^{N} \e\(x_1 n+\ldots +x_d n^{d} \)  \xi_{n}^{k}.
$$
where $\xi_{n}=\delta_1n+\ldots + \delta_dn^d$.   Applying partial sum formula we derive 
$$
\sigma_k=  \sigma_{k,1} + \sigma_{k,2}
$$
where 
$$
\sigma_{k,1} = S_d(\vx; N) \xi_{N}^k  \mand  \sigma_{k,2}
 = \sum_{M=1}^{N-1} S_d(\vu; M)\(\xi_{M}^k -\xi_{M+1}^k\).
$$
By our assumption,  we obtain 
\begin{equation} \label{eq:sigma1}
\sigma_{k,1}\ll ( \kappa N^\alpha + K)(\delta_1N+\ldots + \delta_dN^d)^k \ll (d \tau)^k  ( \kappa N^\alpha + K)
\end{equation}
and also, observing that the sequence $\xi_{M}$ is monotonically non-decre\-asing, we have
\begin{align*}
 \sigma_{k,2} & \ll  \sum_{M=1}^{N-1} ( \kappa M^\alpha + K)   \left| \xi_{M}^k -\xi_{M+1}^k\right|\\
&  =  \kappa  \sum_{M=1}^{N-1} M^\alpha \(\xi_{M+1}^k -\xi_{M}^k\) 
+ K\sum_{M=1}^{N-1} \(\xi_{M+1}^k -\xi_{M}^k\).  
\end{align*}

We now derive
\begin{equation} \label{eq:sigma2}
\begin{split}
 \sigma_{k,2} &\ll   \kappa  \sum_{M=1}^{N} M^{\alpha-1} \xi_{M}^k
+ K \xi_{N}^k \\
& \ll  \kappa   \sum_{M=1}^{N} M^{\alpha-1}  (d \tau)^k 
+ K (d \tau)^k\\
&  \ll (d \tau)^k  ( \kappa N^\alpha + K). 
\end{split}
\end{equation}

We see from~\eqref{eq:sigma1} and~\eqref{eq:sigma2} that 
$$
 \sigma_k \ll    (d \tau)^k  ( \kappa N^\alpha + K)
$$
which together with~\eqref{eq:diff}  yields the desired bound. 
\end{proof}

For the convenience of our applications we   formulate several specialisations of Lemma~\ref{lem:con gen}  
with different choices of $(\alpha, \kappa, K)$.  

For example,  with $(\alpha, \kappa, K)=(0, 0, K)$ we obtain the following result on approximations to rational sums. 

\begin{cor}
\label{cor:for-using} Let integer $N \ge 1$ and integer $m\ge 2$. Let integer $d\ge 3$ and 
 $$
 \va =(a_1, \dots, a_d)\in (\Z_m \setminus\{0\})^d
 $$ 
 be such that for any $n\le N$ we have 
$$
S_d(\va/m; n) \ll K
$$
for some real non-negative $K$.  Then for any  $\tau>0$ and  $\vx \in \Tor$ 
with 
$$
0\le x_j-a_j/m <\tau N^{-j}, \qquad j=1, \ldots, d,
$$
we have 
$$
S_d(\vx; N) - S_d(\va/m; N)\ll \tau K,
$$
where the implied constant is absolute. 
\end{cor}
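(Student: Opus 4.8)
The plan is to read off the statement directly from Lemma~\ref{lem:con gen}. In the notation of that lemma I would take the base vector to be the rational point $\va/m\in\Tor$, let the perturbed vector $\vy$ of the lemma be the vector $\vx$ of the corollary, and make the parameter choice $(\alpha,\kappa,K)=(0,0,K)$.

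First I would verify the hypotheses of Lemma~\ref{lem:con gen} for this data. With $(\alpha,\kappa,K)=(0,0,K)$ the required estimate $S_d(\va/m;M)\ll \kappa M^\alpha+K$ for all $M\le N$ becomes simply $S_d(\va/m;M)\ll K$, which is exactly the standing assumption of the corollary (note that $n\le N$ there includes $n=N$). The proximity condition $0\le x_j-a_j/m<\tau N^{-j}$ for $j=1,\dots,d$ is identical to the one imposed in the lemma. Hence Lemma~\ref{lem:con gen} applies and gives
$$
S_d(\vx;N)-S_d(\va/m;N)\ll \tau\(\kappa N^\alpha+K\)=\tau K,
$$
with an absolute implied constant, which is precisely the assertion.

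The only point worth a remark is the restriction $\tau=O(1)$ built into Lemma~\ref{lem:con gen}: the corollary is intended (and will only be used) in that range, so for bounded $\tau$ the claim is a literal specialisation and nothing further is required. I do not anticipate any real difficulty here — all the analytic content lives in Lemma~\ref{lem:con gen}, whose proof expands $\e(\delta_1 n+\dots+\delta_d n^d)$ as a power series and estimates the resulting sums by partial summation; the corollary is just the convenient special case used later for approximating Weyl sums at rational points by nearby sums.
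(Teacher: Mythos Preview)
Your proposal is correct and matches the paper's approach exactly: the corollary is stated there as the immediate specialisation of Lemma~\ref{lem:con gen} with $(\alpha,\kappa,K)=(0,0,K)$, and your remark about the implicit $\tau=O(1)$ restriction is apt.
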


 If $d=2$ and $\vx = (a/p, b/p) \in \T_2$ for some  integers $a$ and $b$ with $\gcd(b, p) =1$, then by Lemma~\ref{lem:incompleteGauss} we can take  $(\alpha, \kappa, K)= (1, p^{-1/2}, p^{1/2})$
 in Lemma~\ref{lem:con gen}.    

\section{Proofs of  results on small sums}

\subsection{Proof of Theorem~\ref{thm:zero}}

We first show the sets $ \cZ_{2, W},  \cZ_{2, G}$ and $\cZ_{d, G}$ (with $d\ge 3$) are dense $\cG_{\delta}$ sets. We introduce some notation. 
For $p\ge 3$ let 
$$
\cP_{p}=\left\{\(\frac{a}{4p}, \frac{b}{4p}\):~\gcd(ab, 2p)=1,\ 1\le a, b\le 4p,\ a, b\in \N \right\}.
$$

Furthermore, let 
$$
\cQ_{p}=\left \{\frac{b}{2p}:~\gcd(b, 2p)=1, \ 1\le b\le 2p,\ b\in \N \right\} 
$$
and 
$$
\cR_{p}=\left \{\frac{b}{p}:~ 1\le b\le p\right\}.
$$

For $\cA\subseteq \R^d$ the $\delta$-neighbourhood of  $\cA$ is defined as 
\begin{equation}
\label{eq:A}
\cA(\delta)=\{\vx\in \R^d:~\exists\, \va \in \cA \, \text{ such that}\,  |\va-\vx|<\delta\}.
\end{equation}

Clearly for any $p\ge 3$  the functions $S_2(\vx; 4p)$,  $G(x; 2p)$ and $G_d(x; p)$
are continuous function with respect to the variables $\vx\in \T_2$ and  $x\in \T$.  Thus, applying Lemmas~\ref{lem:zero} and~\ref{lem:zero-m}, we derive that for any  $p$ and $\eta>0$ there exists $\delta_{p, \eta} >0$ such that
\begin{itemize}
\item for any  $\vx\in \cP_p(\delta_{p, \eta} )$ we have $|S_2(\vx; 4p)|\le \eta$;
 \item  for $d=2$ and  any  $x\in \ \cQ_p(\delta_{p, \eta} )$   we have   $|G(x; 2p)|\le \eta$;
  \item  for $d\ge 3$, $\gcd(d,p-1)=1$ and   any  $x\in  \cR_p(\delta_{p, \eta} )$   we have   $|G_d(x; p)|\le \eta$. 
\end{itemize}

Using these notation and~\eqref{eq:A}, we set
$$
\widetilde{P} =\bigcap_{j=1}^{\infty}\bigcap_{k=1}^{\infty}\bigcup_{p \geqslant k} \cP_{ p}(\delta_{p, 1/j})\mand 
 \widetilde{Q}=\bigcap_{j=1}^{\infty}\bigcap_{k=1}^{\infty}\bigcup_{p \geqslant k}  \cQ_{ p}(\delta_{p,1/j}), 
$$
and also for $d \ge 3$ 
$$
\widetilde{R}=\bigcap_{j=1}^{\infty}\bigcap_{k=1}^{\infty}\bigcup_{\substack {p \geqslant k\\ \gcd( d,p-1)=1}} \cR_{ p}(\delta_{p,1/j}).
$$

Observe that 
$$
\widetilde{P}\subseteq \cZ_{2,W},  \quad\widetilde{Q}\subseteq \cZ_{2,G}, \quad \widetilde{R}\subseteq \cZ_{d, G} \ (d \ge 3), $$
and for any $k$ the set 
$$
 \bigcup_{p \geqslant k}   \cP_p \subseteq \T_2, 
 \qquad   \bigcup_{p \geqslant k}   \cQ_p \subseteq \T,  \qquad \bigcup_{\substack{p \geqslant k \\ \gcd( d,p-1)=1} }   \cR_p \subseteq \T, 
 $$
are dense open sets, which  finishes the proof for the sets $ \cZ_{2, W},  \cZ_{2, G}$ and $ \cZ_{d, G}$ (with $d\ge 3$).

We now turn to the claim that the set  $\cZ_{d, W}$ with $d\ge 3$ is a dense $\cG_{\delta}$ set in $\Tor$.  This is essentially contained in~\cite[Remark~2.8]{ChSh1}, for the completeness we present the complete argument to here.

For $d\geqslant 3$ and a  prime number $p$ with $\gcd(d, p-1)=1$,  the map: $x\to  x^{d}$ permutes $\F_p$. 
Hence, for any $\lambda \in \F_p\setminus\{0\}$ we have 
\begin{align*}
\sum_{n=0}^{p-1}\ep\(\sum_{j=1}^d \binom{d}{j} \lambda^j   n^j\) &
=  \sum_{n=0}^{p-1}\ep\((\lambda n+1)^{d}-1\)\\
& = \sum_{n=0}^{p-1}\ep\(n^{d}-1\)=  \sum_{n=0}^{p-1}\ep\(n\)=0.
\end{align*}
It follows that for any $\lambda \in \F_p\setminus\{0\}$ we have   
\begin{equation}
\label{eq:lambdazero}
p^{-1}\left(\binom{d}{1} \lambda^1, \ldots, \binom{d}{d} \lambda^d \right)\in \cZ_{d, W}.
\end{equation}

We call the set $\fU\subseteq \F_p^{d} $ a discrete box with 
 the side length
$
\ell(\fU) = L
$
if  
$$
\fU=\cI_1\times \ldots \times \cI_d \subseteq \F_p^{d}
$$
where the set $\cI_j = \{k_j+1, \ldots, k_j +L\}$ is a set of $L \leqslant p $ consecutive integers,
(reduced modulo $p$ if $k_j +L \ge p$) for each $j =1, \ldots, d$. 

Assuming $p > d$ we see that 
$$
 \binom{d}{j}  \not \equiv 0 \pmod p, \qquad j =1, \ldots, d.
$$
By~\cite[Lemma~2.5]{ChSh1} for any  discrete box $\fB  \subseteq \F_p^{d}$ with  side length $\ell(\fB) \geqslant C p^{1-1/2d}\log p$
for some constant $C$ there exists $\lambda\in \F_p\setminus \{0\}$ such that 
$$
\(  \binom{d}{1} \lambda , \ldots,\binom{d}{d}  \lambda^{d} \)\in \fB.
$$ 
Combining with~\eqref{eq:lambdazero} we deduce that  for any box $\cB\subseteq \Tor$ with side length larger than $5Cp^{-1/2d} \log p$  there exists a point $\vx\in \cB \cap \cZ_{d, W}$. Combining with the well known fact that there are infinitely many primes $p$  such that $(d, p-1)=1$ (for instance this follows  by applying Dirichlet's theorem on arithmetic progressions), we conclude that for any $k\in \N$ the set
$$
\bigcup_{\substack {p \geqslant k \\ p\text{ prime} \\ \gcd(d, p-1)=1} }\{\va/p:~\va \in \F_p^{d}, \,S_{d}(\va/p; p)=0\}
$$
is a dense subset of $\Tor$.  By using the continuity of the function $S_d(\vx; N)$ and applying the similar arguments as for  the sets  $ \cZ_{2, W},  \cZ_{2, G}$ and $ \cZ_{d, G}$ (with $d\ge 3$), we obtain the result. 

\subsection{Proof of Theorem~\ref{thm:H-dim}}

We first note that the lower bound for  $\dim_H \cZ_{d, W}$ with $d\ge 3$ follows from the fact that 
$$
\cZ_{2, W}\times \{0\}^{d-2}\subseteq \cZ_{d, W},
$$
and  the monotonicity of the Hausdorff dimension 
$$
\dim_H \cZ_{d, W}\ge \dim_{H} \cZ_{2, W}\times \{0\}^{d-2}.  
$$
Furthermore directly from  the definition of the Hausdorff dimension, we see that
$$
 \dim_{H} \cZ_{2, W}\times \{0\}^{d-2}=\dim_H \cZ_{2, W}.
$$

Thus in the following we only show the lower bounds on the Hausdorff dimension of the sets $\cZ_{2, W}$ and $ \cZ_{d, G}$. Our method is  a modification 
of the argument as in the proof of Theorem~\ref{thm:zero} for the sets $\cZ_{2, W}, \cZ_{2, G}$ and $\cZ_{d, G}$ (with $d\ge 3$).

In analogy of Lemma~\ref{lem:incompleteGauss} we have the following. Using the completion   techniques, see~\cite[Section~12.2]{IwKow}, similarly to  Lemma~\ref{lem:incompleteMonom}  we immediately obtain 
that or each prime $p\ge 3$ and  $a, b\in \F_p\setminus \{0\}$ with $\gcd(ab, 2p)=1$  we have 
\begin{equation}
\label{eq:P}
\max_{1\le N\le 4p}\left |\sum_{n=1}^{N}\e_{4p}(an+bn^2)\right |\ll \sqrt{p} \log p.
\end{equation}
We remark that perhaps one can also remove $\log p$ from the bound~\eqref{eq:P} and have a full analogue of  Lemma~\ref{lem:incompleteGauss}, 
but this does not affect our final result. 

Let $\cP^*$ be the collection of point $(x_1, x_2)\in\T_2$  that there are infinitely many $p$ and $a, b$ such that $\gcd(ab, 2p)=1$ and 
\begin{equation}
\label{eq:box}
0\le x_1-a/p<\frac{1}{p^{5/2}(\log p)^2}, \quad  0\le x_2-b/p<\frac{1}{p^{5/2}(\log p)^2}.
\end{equation}
For these $(x_1, x_2)$ by applying~\eqref{eq:P} and Corollary~\ref{cor:for-using} with $N=4p, K= \sqrt{p}\log p$ and $\tau = \(p^{1/2}(\log p)^2\)^{-1}$, we deduce that  
$$
 \left|\sum_{n=1}^{4p}\e(x_1n+x_2n^2) -\sum_{n=1}^{4p}\e_{4p}(an+bn^2) \right| \ll (\log p)^{-1}.
$$
By Lemma~\ref{lem:zero} (ii) we have 
$$
\left|\sum_{n=1}^{4p}\e(x_1n+x_2n^2) \right| \ll (\log p)^{-1},
$$
and hence
\begin{equation}
\label{eq:P-subset}
\cP^*\subseteq \cZ_{2, W}.
\end{equation}

For Gaussian sums let $a\in \F_p\setminus \{0\} $ and $\gcd(a, 2p)=1$.  Then by~\cite{K} we have  
\begin{equation}
\label{eq:Q}
\max_{1\le N\le 2p}\left |\sum_{n=1}^{N}\e_{2p}(an^2)\right |\ll \sqrt{p}.
\end{equation}
Let $\cQ^*$ be the collection of point $x\in [0,1)$ such that there are infinitely many $p$ and $a\in \F_p\setminus\{0\}$ with $\gcd(a, 2p)=1$ and
$$
0\le x-a/p<\frac{1}{p^{5/2}\log p}.
$$
For this $x$ and $a/p$ by~\eqref{eq:Q} and Corollary~\ref{cor:for-using} with $N=2p, K=\sqrt{p}$ and $\tau = \(p^{1/2}\log p\)^{-1}$, we obtain
$$
\left|\sum_{n=1}^{2p}\e(xn^2) -\sum_{n=1}^{2p}\e_{2p}(an^2) \right| \ll \frac{1}{\log p}.
$$
By Lemma~\ref{lem:zero} (i) we have 
$$
\left|\sum_{n=1}^{2p}\e(x_1n+x_2n^2) \right| \ll (\log p)^{-1},
$$
and hence 
\begin{equation}
\label{eq:Q-subset}
\cQ^*\subseteq \cZ_{2, G}.
\end{equation}

Now we turn to the monomial sums for $d\ge 3$. Let $\cR^*$ be the collection of point $x\in [0,1)$ such  that there are infinitely many $p$ and $a\in \F_p\setminus\{0\}$ with $\gcd(d, p-1)=1$    and 
$$
0\le x-a/p<\frac{1}{p^{d+1/2}(\log p)^2}.
$$
For this $x$ and $a/p$,  using Lemma~\ref{lem:incompleteMonom} and Corollary~\ref{cor:for-using} with $N=p$, $K=\sqrt{p}\log p$ and $\tau=(p^{1/2} (\log p)^2)^{-1}$, we derive that 
$$
\left|\sum_{n=1}^{p}\e(xn^d) -\sum_{n=1}^{p}\e_{p}(an^d) \right| \ll \frac{1}{\log p}.
$$
Combining with Lemma~\ref{lem:zero-m} we have 
$$
\left|\sum_{n=1}^{p}\e(xn^d) \right| \ll (\log p)^{-1},
$$
and hence 
\begin{equation}
\label{eq:R-subset}
\cR^*\subseteq \cZ_{d, G}.
\end{equation}

On the other hand, it  follows essentially from  the Jarn\'ik--Besicovitch theorem~\cite[Section~6]{BBDV} we obtain 
$$
\dim_H \cP^*=  6/5 \mand  \dim_H \cQ^* = 4/5, 
$$
and also for $d\ge 3$ 
$$
\dim_H \cR^*=4/(2d+1).
$$ 
Combining with~\eqref{eq:P-subset},~\eqref{eq:Q-subset} and ~\eqref{eq:R-subset} and using  the monotonicity of
the Hausdorff dimension we derive the result. We omit the details here, but refer to~\cite[Theorem~10.3]{Falconer} and~\cite{ChSh1} for the closely related arguments. 

We remark that we need the prime number theorem for the lower bounds of $ \dim_H \cP^*, \dim_H \cQ^* $, see~\cite[Theorem~10.3]{Falconer}  for details
(and also~\cite{ChSh1}).  Similarly, for the lower bounds of $\dim_H \cR^*$ we  need the prime number theorem for arithmetic progressions in a very week form that for all large enough $x$ one has 
$$
\#\{x\le p\le 2x: p \text{ is prime and } \gcd(p-1, d)=1 \}\gg \frac{x}{\log x}.
$$

\section{Further results, open problems and conjectures} 
\label{sec:further}

\subsection{An approach to improve the lower bound for $\dim_H \cZ_{d, W}$} 

We use the notation from the proof of Theorem~\ref{thm:H-dim}. For $d=2$ the  ``box condition"~\eqref{eq:box} can be extended to the below   ``rectangle  condition'' given 
by~\eqref{eq:rectangle} below.  To be precise, let $\fR$ be the collection of point $(x_1, x_2)\in\T_2$  that there are infinitely many $p$ and $a, b$ such that $\gcd(ab, 2p)=1$ and 
\begin{equation}
\label{eq:rectangle}
0\le x_1-a/p<\frac{1}{p^{3/2}(\log p)^2}, \quad  0\le x_2-b/p<\frac{1}{p^{5/2}(\log p)^2}.
\end{equation}
Clearly we have 
$$
\cP^* \subseteq \fR.
$$
Moreover for $(x_1, x_2)\in \fR$  by applying~\eqref{eq:P} and Corollary~\ref{cor:for-using} with 
$$
N=4p, \qquad K=p^{1/2}\log p,\qquad  \tau=1/p^{1/2}(\log p)^2,
$$ 
we deduce that  
$$
 \left|\sum_{n=1}^{4p}\e(x_1n+x_2n^2) -\sum_{n=1}^{4p}\e_{4p}(an+bn^2) \right| \ll (\log p)^{-1}.
$$
By Lemma~\ref{lem:zero} (ii) we have 
$$
\left|\sum_{n=1}^{4p}\e(x_1n+x_2n^2) \right| \ll (\log p)^{-1},
$$
and hence 
$$
\fR \subseteq \cZ_{2, W}.
$$
Thus we have a larger subset of $\cZ_{2, W}$, however we do not know how to deal with the Hausdorff dimension of $\fR$.  

Furthermore for $\dim_H \cZ_{d, W}$ with $d\ge 3$ we conjecture that one could  improve the Hausdorff dimension of the set  $\cZ_{d, W}$ by taking  a direct way instead of  the arguments at beginning of the proof of Theorem~\ref{thm:H-dim}. 

We remark that the results and techniques of~\cite{Brud,BD}  can shed some light 
on improving  Lemma~\ref{lem:con gen} and similar  ``continuity properties'' of Weyl sums, and thus may 
lead improvement to improvements of the bound of Theorem~\ref{thm:H-dim}  on the dimension of $\cZ_{d, W}$.

\subsection{The topology of Weyl sums}  
For $\vx\in \Tor $ we define the orbit of $\vx$ as 
$$
O_d(\vx)=\{S_d(\vx; N):~N\in \N\}.
$$
We remark that our interest to orbits is partially motivated by classical  works of Lehmer~\cite{Leh}, Loxton~\cite{Lox1, Lox2} 
and Forrest~\cite{Forrest1, Forrest2}, as well as  
by more recent results of  Cellarosi and   Marklof~\cite{CeMa},  Fayad~\cite{Fayad}, Greschonig, Nerurkar and Voln\'y~\cite{GNV}.
In the discrete settings, that is, for rational exponential sum, similar questions have been considered by   Demirci Akarsu~\cite{D_A1, D_A2}, Demirci Akarsu and   Marklof~\cite{D_AMa}, Kowalski and Sawin~\cite{KoSa1,KoSa2},  Ricotta and  Royer~\cite{RiRo},  Ricotta, Royer and the second author~\cite{RiRoSh}. 

From~\eqref{eq:zero-accumulate} and~\eqref{eq:infinity-accumulate} we obtain that there is a dense $\cG_{\delta}$ set of $\vx\in \Tor$ such that  the zero of $\CC$ is a accumulation point of $O_d(\vx)$, and the set $O_d(\vx)$ is unbounded (alternatively,  we can say that the infinity is an accumulation point of the set $O_d(\vx)$).  

Modelling the sums $S_d(\vx; N)$  by the sums of $N$ independent and uniform distributed  random 
complex vectors from the unit circle, it is natural to make the following:

\begin{conj}\label{con:dense}
For almost all $\vx\in \Tor$ in the sense of Lebesgue measure the orbit $O_d(\vx)$ is everywhere dense in $\CC$.
\end{conj}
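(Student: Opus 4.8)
The plan is to translate the density of $O_d(\vx)$ into a statement about an infinite-measure-preserving skew product and to aim at proving its ergodicity for Lebesgue-almost every base point. First I would encode the polynomial phase $P_{\vx}(n)=x_1n+\ldots+x_dn^d$ by its successive finite differences: the vector $\vy^{(n)}=\bigl(P_{\vx}(n),\Delta P_{\vx}(n),\ldots,\Delta^{d-1}P_{\vx}(n)\bigr)\bmod 1\in\Tor$ obeys $\vy^{(n+1)}=A\vy^{(n)}+\vb$ for a fixed unipotent linear map $A$ on $\Tor$ and a fixed $\vb$ (whose last coordinate is $d!\,x_d$), while $\vy^{(0)}$ depends affine-linearly on $\vx$, so it is Lebesgue-typical when $\vx$ is. Since $P_{\vx}(n)$ is the first coordinate of $\vy^{(n)}$, we get $S_d(\vx;N)=\sum_{n=1}^N\e\bigl(\pi(\vy^{(n)})\bigr)$, the Birkhoff sum over the measure-preserving system $T\colon\vy\mapsto A\vy+\vb$ of the $\CC$-valued cocycle $\varphi(\vy)=\e(\pi(\vy))$. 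Hence $O_d(\vx)$ is exactly the forward orbit of $(\vy^{(0)},0)$ under the skew product $F(\vy,z)=(T\vy,\,z+\varphi(\vy))$ on $\Tor\times\CC$, which preserves the infinite measure $\mu\times m$ ($\mu$ Lebesgue on $\Tor$, $m$ planar Lebesgue). For a conservative ergodic infinite-measure-preserving system almost every orbit is dense in the support of the invariant measure, so it would suffice to show that $F$ is conservative and ergodic for $\mu\times m$, the support being all of $\Tor\times\CC$ once $\varphi$ is shown not to be cohomologous to a cocycle valued in a coset of a proper closed subgroup of $\CC$.

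Carrying this out requires two ingredients. The first is the non-degeneracy of the cocycle: that $\varphi$ is not a coboundary and, more strongly, that the group of its essential values is not contained in a proper closed subgroup of $\CC\cong\R^2$. The ``not a coboundary'' part is already forced for almost every $\vx$ by the size of $|S_d(\vx;N)|$ — a coboundary would make $S_d(\vx;N)$ bounded, contrary to the lower bounds behind \eqref{eq:M-R} (cf.\ Theorem~\ref{thm:HL}(i) and Theorems~\ref{thm:FJK}--\ref{thm:FK}) — and I would expect the non-linearity of $\varphi$ together with the rigidity of unipotent toral dynamics to rule out the subgroup case. The second ingredient is a quantitative recurrence/equidistribution estimate for $T$ feeding a Schmidt-type essential-values argument: for $d=2$ the base $T$ is a rotation of $\Tor$ and this is precisely the framework of Forrest~\cite{Forrest1,Forrest2}, Fayad~\cite{Fayad} and Greschonig, Nerurkar and Voln\'y~\cite{GNV}; for $d\ge3$ one would replace this by effective equidistribution of the polynomial orbits $(\vy^{(n)})$ on $\Tor$.

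The hard part — and the reason this is only a conjecture — is establishing the ergodicity itself, i.e.\ that the essential-value group of $\varphi$ is all of $\CC$. Two-dimensional ($\R^2$-valued) cocycles over zero-entropy systems are qualitatively more delicate than $\R$-valued ones: there is no order to exploit, recurrence in the plane is only barely valid, and the available criteria demand control of the fine oscillation of $S_d(\vx;N)$ that goes well beyond current bounds on Weyl sums. Even for $d=2$ this remains open in the generality needed — it is essentially the content of Conjecture~\ref{conj:Forr}. An alternative, more analytic route would bypass dynamics entirely: for each ball $B\subset\CC$ one would estimate the first and second moments over $\vx\in\Tor$ of $\#\{N\le X:\ S_d(\vx;N)\in B\}$, showing the mean grows (heuristically like a constant times $\log X$, using a limiting distribution of Jurkat--van Horne type) while the variance stays comparably small, and then combine such estimates over a countable basis of balls via a Borel--Cantelli argument. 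The circle method and Vinogradov's mean value theorem should give access to the relevant moments, but controlling the correlations of the events $\{S_d(\vx;N)\in B\}$ across different $N$ — the genuine obstacle here — seems to require substantially new input.
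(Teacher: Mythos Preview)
The statement you are addressing is labelled as a \emph{conjecture} in the paper and appears in the section on open problems; the paper gives no proof, only the heuristic motivation (modelling $S_d(\vx;N)$ by a random walk on the unit circle) and pointers to the dynamical literature. There is therefore no ``paper's own proof'' to compare your attempt against.

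Your write-up is not a proof either, and you say so yourself: you correctly set up the skew-product formalism over the unipotent affine map $T$ on $\Tor$ with cocycle $\varphi=\e\circ\pi$, identify the goal (full essential-value group in $\CC$), and then name the genuine obstruction --- controlling a two-dimensional cocycle over a zero-entropy base --- which is exactly why the paper records this as open and why even the $d=2$ case (Conjecture~\ref{conj:Forr}) is unresolved. The alternative Borel--Cantelli/moment route you sketch is also a program rather than an argument, since the off-diagonal correlations of the events $\{S_d(\vx;N)\in B\}$ are not presently accessible. One small technical remark: in your encoding the translation vector $\vb$ depends on $x_d$, so $T$ (and hence $F$) is not a single system but a one-parameter family indexed by $x_d$; any ergodicity statement would have to be established for Lebesgue-almost every $x_d$, with the remaining coordinates of $\vx$ entering through the initial point $\vy^{(0)}$. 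This is consistent with the Forrest framework for $d=2$ but is worth making explicit.

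In short: there is no gap to flag beyond the ones you already flag, and no discrepancy with the paper, because the paper does not claim a proof.
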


We remark  that for some point $\vx\in \Tor$ the set $O_d(\vx)$  is not dense in $\CC$. For instance,
 Theorem~\ref{thm:HL}~(ii)  implies that  if the continued fraction of $x_2$ has bounded partial quotients then for any $x_1\in \T$ the set $O_2(\vx)$ is not a dense subset of $\CC$.  

We show two more examples  in the following that the set $O_2(\vx)$ have a biased distribution. From  Lemma~\ref{lem:zero} (ii) and~\eqref{eq:P}  for any integer $1\le a, b\le p-1$ with $\gcd(ab, 2p)=1$ and any $N\in \N$ we obtain
$$
\left|\sum_{n=1}^N\e_{4p}(an+bn^2)\right| \ll \sqrt{p}\log p.
$$
It follows that  the set $O_2(a/4p, b/4p)$ is bounded.  

Now we turn to another example. From  Lemma~\ref{lem:incompleteGauss}  for any $1\le a, b\le p-1$  and any $N>p$ we have 
$$
S_d((a/p, b/p); N)=\fl{N/p} S_d((a/p, b/p); p)+O(\sqrt{p}).
$$
Since $S_d((a/p, b/p); p)$ is a constant, we deduce that 
the infinity of $\CC$ is the only accumulation point of the set $O_2(a/p, b/p)$. Moreover the set $O_2(a/p, b/p)$ is contained in some tube of $\CC$ with width nearly $\sqrt{p}$, that is, recalling the definition of $\cA(\delta)$ at~\eqref{eq:A},
$$
O_2(a/p, b/p)\subseteq \cL(C\sqrt{p}),
$$ 
where $\cL$ is some line of $\CC$ and $C$ is some absolute positive constant.

\subsection{Restricted Weyl sums}

For an integer $d\ge 2$ and a real $\alpha\in (0,1)$ we denote 
$$
\cE_{d, \alpha}=\{\vx\in \Tor:~|S_d(\vx; N)|\ge N^{\alpha} \text{ for infinitely many } N\in \N\}.
$$
From~\eqref{eq:M-R}  we deduce that for any $\alpha\in (1/2,1)$ one has 
$\lambda_d(\cE_{d, \alpha})=0$, 
where  $\lambda_d$ is the $d$-dimensional Lebesgue measure.

Note that Theorem~\ref{thm:FK}  implies that $\lambda_2(\cE_{2, 1/2})=1$.

Motivated from the works on Diophantine approximation on manifold, see~\cite{BL, BV, HuYu} and 
references therein,  we consider the following very general  question. 
We remark that the below  set $\cX$ can be some fractal set.

\begin{question}
For  a given set $\cX\subseteq \Tor$ equipped with some measure  $\mu$ what can we say about 
$$
\dim_H (\cE_{d,\alpha}\cap \cX) \mand \mu(\cE_{d,\alpha}\cap \cX)
$$
provided $\cX$ has some natural geometric, algebraic or combinatorial structure? 
\end{question}

For the following special case when $\cX \subset \T_2$ is a parabola, we make:

\begin{conj}[Dimension]   
\label{con:dimension}
Let $\cX=\{(t, t^2):~t\in [0,1]\}$ then 
$$
\dim_H (\cE_{2,\alpha}\cap \cX)=\max\{\dim_{H} \cE_{2, \alpha}-1, 0\}.
$$ 
\end{conj}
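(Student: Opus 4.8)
The plan is to prove matching upper and lower bounds for $\dim_H(\cE_{2,\alpha}\cap\cX)$, both equal to $\max\{\dim_H\cE_{2,\alpha}-1,0\}$, exploiting the classical slicing heuristic that a curve of nonvanishing curvature in $\T_2$ meets a ``Diophantine'' set of dimension $s$ in a set of dimension $s-1$. The first step is to isolate the arithmetic mechanism producing large values. Arguing exactly as in the proof of Theorem~\ref{thm:H-dim}: if $|S_2(\vx;N)|\ge N^\alpha$ then, up to factors $N^{o(1)}$, the vector $\vx$ lies in a rectangle $R(a/q,b/q)$ of dimensions $\asymp N^{-1}\times N^{-2}$ centred at a rational $(a/q,b/q)$ with $q\le N^{2(1-\alpha)+o(1)}$ whose complete Gauss sum $\sum_{n=1}^{q}\e_q(an+bn^2)$ has size $\asymp\sqrt q$; conversely, by Corollary~\ref{cor:for-using} together with Lemma~\ref{lem:incompleteGauss}, each such rectangle lies inside $\cE_{2,\alpha}$. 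Write $\cE_{2,\alpha}^{\mathrm{rat}}$ for the limsup over $q$ of the union of these rectangles, so $\cE_{2,\alpha}^{\mathrm{rat}}\subseteq\cE_{2,\alpha}$; one first proves the conjecture with $\cE_{2,\alpha}$ replaced by $\cE_{2,\alpha}^{\mathrm{rat}}$, and then addresses the difference. Note that for $\alpha\le1/2$ the set $\cE_{2,\alpha}$ has full measure by~\eqref{eq:M-R}, so $\max\{\dim_H\cE_{2,\alpha}-1,0\}=1$ and the conjecture should follow from the fact that almost every $t\in[0,1]$ already meets the threshold; the substantive range is $\alpha\in(1/2,1)$.

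\emph{Upper bound.} Fix a dyadic value of $q$ and the corresponding critical $N\asymp q^{\beta}$, where $\beta=1/(2(1-\alpha))$, and intersect each rectangle $R(a/q,b/q)$ with $\cX=\{(t,t^2):t\in[0,1]\}$. Because $\cX$ has nonvanishing curvature, $\cX$ passes $N^{-2}$-close to $(a/q,b/q)$ only when $b$ is the nearest integer to $a^2/q$ and $\|a^2/q\|\ll aN^{-1}$; counting square roots modulo $q$, this cuts the $\asymp q^2$ rationals of denominator $q$ down to $\asymp q^{2-\beta+o(1)}$, and for each the trace $\cX\cap R(a/q,b/q)$ is a single arc of length $\asymp\min\{N^{-1},(q/a)N^{-2}\}$, the curvature entering precisely here. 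Covering the resulting level-$q$ subset of $\cX$ by intervals of length $q^{-2\beta}$ requires $\asymp q^{2-\beta+o(1)}$ of them, giving an $s$-content contribution $\asymp q^{2-\beta-2\beta s+o(1)}$; summing over dyadic $q$ this is finite once $s>(2-\beta)/(2\beta)$, which is exactly $1$ less than the critical exponent $(2+\beta)/(2\beta)$ of the analogous ambient computation, and is $\le0$ precisely when the ambient one is $\le1$. Hence $\dim_H(\cE_{2,\alpha}^{\mathrm{rat}}\cap\cX)\le\max\{\dim_H\cE_{2,\alpha}^{\mathrm{rat}}-1,0\}$.

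\emph{Lower bound.} One constructs a Cantor subset of the parameter interval $[0,1]$ as a nested family of arcs of $\cX$ around the extremal rationals above, at each stage taking a prime modulus $q=p$ so that the complete Gauss sum has full size $\sqrt p$ for every residue, thereby avoiding the vanishing phenomenon of Lemma~\ref{lem:zero}, and arranging that every $t$ in the limit set satisfies $|t-a/q|<N^{-1}$ and $|t^2-b/q|<N^{-2}$ for infinitely many of these $(a/q,b/q)$, whence $|S_2((t,t^2);N)|\gg N/\sqrt q\ge N^\alpha$ infinitely often, so the limit set lies in $\cE_{2,\alpha}\cap\cX$. Equipping it with the natural mass distribution and estimating local masses matches the covering count of the upper bound, so a mass distribution principle in the spirit of the work on Diophantine approximation on manifolds~\cite{BL,BV,HuYu} (compare~\cite[Section~6]{BBDV} and~\cite[Theorem~10.3]{Falconer}) gives the matching lower bound, and hence the conjectured equality with $\cE_{2,\alpha}^{\mathrm{rat}}$ in place of $\cE_{2,\alpha}$. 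The key input to make this work is a sharp ubiquity statement: that the admissible rationals on $\cX$ are, at the critical scale, spread along the curve with the density predicted by the count in the upper bound.

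\emph{Main obstacle.} Two points separate this from the full conjecture. Arithmetically, the assertion that $\cX$ passes $N^{-2}$-close to only $O(q^{1+o(1)})$ rationals of denominator $q$, together with the correct density of these rationals along $\cX$ and the correct widest admissible rectangle in the curve's intrinsic metric, is the analogue in our setting of counting rationals near a nondegenerate manifold, and obtaining it with an effective error term uniform in $q$ (and, for the lower bound, with primality built in so as to dodge Lemma~\ref{lem:zero}) is delicate. More seriously, the conjecture is phrased in terms of $\dim_H\cE_{2,\alpha}$, which is itself not known for $\alpha\in(1/2,1)$: large values of $S_2(\vx;N)$ also arise near the generic square-root cancellation threshold rather than only at well-approximable rationals, so one would need either to prove that this non-arithmetic contribution does not raise the dimension of $\cE_{2,\alpha}$ (i.e.\ that $\dim_H\cE_{2,\alpha}=\dim_H\cE_{2,\alpha}^{\mathrm{rat}}$) or to set up the slicing argument in a form robust enough to apply to $\cE_{2,\alpha}$ directly through its still conjectural limsup structure. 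Reconciling $\cE_{2,\alpha}^{\mathrm{rat}}$ with $\cE_{2,\alpha}$, and establishing the sharp ubiquity of rationals on $\cX$, is where I expect the real difficulty to lie.
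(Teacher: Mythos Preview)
The statement you are addressing is labelled \emph{Conjecture} in the paper; it is explicitly posed as an open problem in Section~\ref{sec:further} and the paper offers no proof, partial or otherwise. There is therefore nothing to compare your attempt against.

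What you have written is not a proof but a strategic outline, and to your credit you say so yourself in the final paragraph. The most serious gap is one you have already identified: the conjecture equates $\dim_H(\cE_{2,\alpha}\cap\cX)$ with $\max\{\dim_H\cE_{2,\alpha}-1,0\}$, and the right-hand side is itself unknown for $\alpha\in(1/2,1)$. Your argument really computes (heuristically) the dimension of $\cE_{2,\alpha}^{\mathrm{rat}}\cap\cX$ and of $\cE_{2,\alpha}^{\mathrm{rat}}$, and then hopes that $\cE_{2,\alpha}$ and $\cE_{2,\alpha}^{\mathrm{rat}}$ coincide up to a set of smaller dimension. That last step is the heart of the matter and is not available: the implication ``$|S_2(\vx;N)|\ge N^\alpha$ forces $\vx$ into a rectangle of the stated shape around a rational with large Gauss sum'' is precisely what one would need to determine $\dim_H\cE_{2,\alpha}$, and nothing in the paper (in particular not the proof of Theorem~\ref{thm:H-dim}, which only produces \emph{subsets} of the zero set) supplies it. Your upper-bound step therefore bounds the wrong set.

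There is also a more local issue in the upper bound. The counting of rationals $(a/q,b/q)$ within $N^{-2}$ of the parabola, together with the claimed arc lengths, amounts to a sharp count of rational points near a nondegenerate planar curve with a specific error term uniform in $q$; this is a substantial theorem in metric Diophantine approximation (of the type in~\cite{BL,BV,HuYu}) and cannot be taken for granted at the level of a one-line estimate, especially if one simultaneously wishes to restrict to prime moduli as in your lower bound. Until both this and the structural identification $\dim_H\cE_{2,\alpha}=\dim_H\cE_{2,\alpha}^{\mathrm{rat}}$ are established, the outline remains a heuristic in support of the conjecture rather than a proof.
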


\begin{conj}[Measure] 
\label{con:manifold}
Let $\cX=\{(t, t^2):~t\in [0,1]\}$ and let $\mu$ be the natural probability measure on $\cX$. Then one has 
  $\mu(\cE_{2, 1/2}\cap \cX)=1$. 
\end{conj}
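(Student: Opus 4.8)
The statement is a conjecture, so what follows is a proposed programme rather than a finished argument. Along the parabola we have
$$
S_2\left((t,t^2);N\right)=\sum_{n=1}^{N}\e\left(t^2n^2+tn\right),
$$
and, since $\mu$ is mutually absolutely continuous with the push-forward of Lebesgue measure on $[0,1]$ under $t\mapsto (t,t^2)$, the conjecture is equivalent to the assertion that $\left|S_2\left((t,t^2);N\right)\right|\ge N^{1/2}$ for infinitely many $N$, for almost all $t\in[0,1]$. It is therefore enough to prove the stronger fact that
$$
\uplim_{N\to\infty}\frac{\left|S_2\left((t,t^2);N\right)\right|}{\sqrt N}=\infty\qquad\text{for almost all }t\in[0,1].
$$
The plan is a ``resonance at a large modulus'' argument localised near the rationals $a/q$ on the $t$-line, combined with the distributional results on quadratic Weyl sums of Jurkat and van Horne and with a metric, Borel--Cantelli type, step.

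First I would establish a local renormalisation identity. Fix an odd $q$, a residue $a$ with $\gcd(a,q)=1$, write $t=a/q+\theta$ with $|\theta|<q^{-2}$, and set $N=\lfloor c q^2\rfloor$ for a small absolute constant $c$. Splitting $n=qm+r$ with $0\le r<q$ and $0\le m\le N/q$ and expanding the phase $t^2n^2+tn$ around $t=a/q$, one expects an approximate factorisation
$$
S_2\left((t,t^2);N\right)=\mathfrak{G}(a,q)\,\mathcal{M}_q(\beta)+(\text{error}),
$$
where $\mathfrak{G}(a,q)=\sum_{r=0}^{q-1}\e\left((a^2r^2+ar)/q\right)$ is a complete generalised Gauss sum of absolute value $\asymp\sqrt q$, and $\mathcal{M}_q(\beta)=\sum_{m\le N/q}\e\left(\beta m^2+\cdots\right)$ is a renormalised quadratic sum of length $\asymp q$ whose leading coefficient equals, up to integers, $\beta=\beta(t,q)=2aq\theta$, so that $\beta$ sweeps out an interval of length $\asymp 1$ as $t$ ranges over a fixed interval of length $\asymp q^{-1}$ about $a/q$. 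Controlling the error uniformly in $t$ is the technical heart of this step, and it is here that the constraint $x_2=x_1^2$ is genuinely used, since the algebraic link between the coefficients $t^2$ and $t$ is exactly what makes the cross terms collapse to the single parameter $\beta$. The tools behind Lemma~\ref{lem:con gen}, Corollary~\ref{cor:for-using} and Lemma~\ref{lem:incompleteGauss}, together with one step of the continued-fraction renormalisation of theta sums in the spirit of Fiedler--Jurkat--K\"orner and of Fedotov and Klopp~\cite{FK}, should do it.

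Granting the identity, $\left|S_2\left((t,t^2);N\right)\right|\gg\sqrt q\,\left|\mathcal{M}_q(\beta)\right|$, so it suffices to know that $\left|\mathcal{M}_q(\beta)\right|\ge k\sqrt q$ for a prescribed $k$. By the work of Jurkat and van Horne~\cite{JH1,JH2,JH3} (see also Marklof~\cite{Marklof}) on the distribution of $N^{-1/2}\sum_{n\le N}\e(xn^2)$, whose limiting law is non-Gaussian with heavy tails $\Psi_N(\alpha)\gg \alpha^{-4}$ uniformly for large $N$, the set of $\beta$ in a unit interval with $\left|\mathcal{M}_q(\beta)\right|\ge k\sqrt q$ has measure at least some $\delta_k>0$ once $q$ is large. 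Pulling back through $\beta=2aq\theta$, the set
$$
B_q^{(k)}=\left\{t:\ \exists\,a,\ \gcd(a,q)=1,\ |t-a/q|<q^{-2},\ \left|\mathcal{M}_q(\beta(t,q))\right|\ge k\sqrt q\right\}
$$
satisfies $\lambda(B_q^{(k)})\gg \delta_k\,\varphi(q)/q^{2}$, and every $t\in B_q^{(k)}$ forces $\left|S_2\left((t,t^2);\lfloor cq^2\rfloor\right)\right|\gg k\sqrt{\lfloor cq^2\rfloor}$. Restricting $q$ to primes gives $\sum_p\lambda(B_p^{(k)})\gg\delta_k\sum_p 1/p=\infty$, so, provided the events $B_p^{(k)}$ are quasi-independent in the sense $\lambda(B_p^{(k)}\cap B_{p'}^{(k)})\ll\lambda(B_p^{(k)})\lambda(B_{p'}^{(k)})$ for $p\ne p'$, the divergence Borel--Cantelli lemma yields $\lambda(\limsup_p B_p^{(k)})>0$; a zero--one law — either a Gallagher-type argument or the ergodicity of the Weyl-sum cocycle studied by Forrest~\cite{Forrest1,Forrest2}, Fayad~\cite{Fayad} and Greschonig, Nerurkar and Voln\'y~\cite{GNV} — then promotes this to full measure. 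Intersecting over $k\in\N$ gives $\uplim_{N\to\infty}N^{-1/2}\left|S_2\left((t,t^2);N\right)\right|=\infty$ for almost all $t$, which is more than the conjecture asks.

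The decisive difficulty, I expect, is the quasi-independence of the $B_p^{(k)}$, equivalently the joint equidistribution of the renormalised parameters $\beta(t,p)$ for several primes $p$ as $t$ varies along the parabola: the conditions ``$t$ near $a/p$'' decorrelate across distinct primes by the Chinese remainder theorem, but the extra large-value conditions on $\mathcal{M}_p(\beta(t,p))$ depend on $t\mapsto(t,t^2)$ nonlinearly and their independence does not seem to be accessible by soft arguments. The renormalisation identity of the second step, though also delicate, looks comparatively routine. A possible way around the metric step would be to recast the whole question inside the dynamical system of Forrest, Fayad and Greschonig--Nerurkar--Voln\'y, identify $\cE_{2,1/2}\cap\cX$ with a positive-measure invariant event and conclude by ergodicity; the price is then transporting the conclusion back to Lebesgue measure along $\cX$.
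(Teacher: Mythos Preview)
The paper does not prove Conjecture~\ref{con:manifold}; it is stated as an open problem, so there is nothing to compare your programme against on the paper's side. Your outline is a reasonable heuristic sketch, and you correctly flag the quasi-independence of the events $B_p^{(k)}$ as the main analytic obstacle, but there is a more basic structural problem in your first step that you should address before worrying about Borel--Cantelli.

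Your claimed factorisation with the Gauss sum $\mathfrak{G}(a,q)=\sum_{r=0}^{q-1}\e\bigl((a^2r^2+ar)/q\bigr)$ does not arise from the phase $t^2n^2+tn$ at $t=a/q$. Writing $n=qm+r$ one finds, modulo integers,
\[
t^2n^2+tn\;\equiv\;\frac{a^2r^2}{q^2}+\frac{ar}{q}+\frac{2a^2mr}{q},
\]
so the quadratic-in-$r$ contribution has denominator $q^2$, not $q$, and the term $2a^2mr/q$ couples $m$ and $r$ and prevents a clean splitting of the $r$-sum from the $m$-sum. The point is that on the parabola the rational approximant to $(t,t^2)$ coming from $t\approx a/q$ is $(a/q,a^2/q^2)$, whose second coordinate has denominator $q^2$; there is no rational of the shape $(a/q,b/q)$ near $\cX$ unless $q\mid a$. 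Hence the natural arithmetic period here is $q^2$, not $q$, and with $N\asymp q^2$ you then have $O(1)$ complete blocks rather than $\asymp q$, so the renormalised sum $\mathcal{M}_q(\beta)$ of length $\asymp q$ that you need does not emerge from this decomposition. In other words, the algebraic constraint $x_2=x_1^2$ does not make the cross terms collapse --- it is precisely what prevents the standard $\bmod\,q$ factorisation available for generic $(x_1,x_2)$. Any viable version of your first step will have to work around this, for instance by approximating $t^2$ (rather than $t$) by $b/q$ and tracking the induced Diophantine condition on $t$, or by carrying out a genuine theta-sum renormalisation in the style of~\cite{FJK,FK} along the one-parameter family; either way the identity you need is not the one you wrote.
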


It is also natural to ask similar  questions about the intersection $\cE_{d,\alpha}$ with  the moment curve, that is,   
$$
\cE_{d,\alpha}\cap  \{(t, t^{2}, \ldots, t^d):~t\in [0,1]\}.
$$

\subsection{Random Cantor sets} 
Now we turn to the case where $\cX$ is some random fractal set. We consider $d=2$ and a simpler model of random Cantor sets to show our ideas. We start by informal description of  the model, see~\cite[Chapter~15]{Falconer} for more related  constructions (see also~\cite{Ch} for the detailed construction and reference therein).  
 
We remark that many other random fractals are also called {\it random Cantor sets\/}.
 
We apply the following iterative procedure:

 \begin{itemize}
\item  We divide the unit square $[0,1]^2$ into four equal interior disjoint closed squares in a natural way such that each of these four squares has side length $1/2$,
We  choose uniformly at random remove one square, and let $\cE_1^{\omega}$ be the collection of the three  remaining  squares.

\item For each of the remaining  squares, we apply the same procedure and obtain a collection $\cE_2^{\omega}$   of nine squares. 
  
\item We continue inductively in the same manner by dividing each square into 
 four squares and then uniformly and independently at randomly remove  one, 
 getting a collection    $\cE_n^{\omega}$  of $3^n$ squares.
 \end{itemize}
 
 Clearly, each square in $\cE_n^{\omega}$ has the side length $2^{-n}$.

 \begin{definition}[Random Cantor set]
\label{def:Cantor}
A random Cantor set is 
$$
\cE^{\omega}=\bigcap_{n=1}^{\infty}\cE_n^{\omega}.
$$
Let $\Omega$ be our probability space which consists of  all the possible outcomes of these random limit sets. 
\end{definition}

Now for each random Cantor set $\cE^{\omega}\in \Omega$ we associate a natural  measure on $\cE^{\omega}$. The desired  measure should give each squares of $\cE_n^{\omega}$ the same mass, which is $1/3^n$. To be precise, let $\cE^{\omega}=\bigcap_{n=1}^\infty \cE_n^{\omega}$ be a realization. For each $n\in \N$ define the measure 
$$
\mu_n^{\omega}(\cA)= \int_{\T_2} \textbf{1}_{\cA \cap \cE_n^{\omega}}(\vx) (4/3)^n d\vx
$$ 
where $\textbf{1}_{\cF}$ is the indicator function of the set $\cF$. Note that for every  square $\cQ$ of $\cE_n^{\omega}$ we have 
$$
\mu_n^{\omega}(Q)=1/3^n \mand \mu_n^{\omega}(\T_2)=\mu_n^{\omega}(\cE_n)=1.
$$
Note that the sequence of the measure  $\mu_n^{\omega}, n\in \N$ weakly convergence to a measure $\mu^{\omega}$, 
see~\cite[Chapter~1]{Mattila1995}. We call this measure $\mu^{\omega}$ the natural measure on $\cE^{\omega}$. 

For our application, we  need the following Lemma~\ref{lem:measureargument}, suggested by Pablo Shmerkin (private communication). For completeness we present the complete proof here. In the following we use $\lambda_d$ to denote the $d$-dimensional Lebesgue measure. 

\begin{lemma}\label{lem:measureargument}
Let $\cF\subseteq [0,1]^d$ with $\lambda_d(\cF)=0$ then almost surely (for $\cE^{\omega}\in \Omega$)  we have $\mu^{\omega}(\cF)=0$.
\end{lemma}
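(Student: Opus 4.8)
The plan is to exploit the product/independence structure of the random Cantor construction together with a first-moment (expectation) computation. Fix a Borel set $\cF\subseteq[0,1]^d$ with $\lambda_d(\cF)=0$. For each level $n$, let $\cE_n^\omega$ be the union of the $3^n$ surviving squares of side $2^{-n}$, and recall $\mu_n^\omega(\cA)=(4/3)^n\lambda_d(\cA\cap\cE_n^\omega)$. The key observation is that for a \emph{fixed} square $Q$ of the standard dyadic grid at level $n$, the event $\{Q\subseteq\cE_n^\omega\}$ has probability exactly $(3/4)^n$ (at each of the $n$ stages the ancestor square must avoid being the one deleted, an independent event of probability $3/4$). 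Hence, by Fubini/Tonelli,
\[
\E\bigl[\mu_n^\omega(\cF)\bigr]=(4/3)^n\,\E\bigl[\lambda_d(\cF\cap\cE_n^\omega)\bigr]
=(4/3)^n\sum_{Q}\lambda_d(\cF\cap Q)\,\Pr[Q\subseteq\cE_n^\omega]
=(4/3)^n(3/4)^n\lambda_d(\cF)=\lambda_d(\cF)=0,
\]
the sum being over dyadic level-$n$ squares $Q$ and using $\Pr[Q\subseteq\cE_n^\omega]=(3/4)^n$ uniformly in $Q$. Thus $\mu_n^\omega(\cF)=0$ almost surely for each fixed $n$, and a.s.\ for all $n$ simultaneously.

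The remaining point is to pass from $\mu_n^\omega$ to the weak limit $\mu^\omega$. Here I would first reduce to the case where $\cF$ is open: since $\lambda_d(\cF)=0$, by outer regularity there are open sets $\cF\subseteq U_m$ with $\lambda_d(U_m)\to 0$, and by the portmanteau theorem $\mu^\omega(U_m)\le\liminf_{n\to\infty}\mu_n^\omega(U_m)$; if we knew $\mu_n^\omega(U_m)\to 0$ a.s.\ this would give $\mu^\omega(\cF)\le\mu^\omega(U_m)=0$ on letting $m\to\infty$. But $\E[\mu_n^\omega(U_m)]=\lambda_d(U_m)$ does \emph{not} tend to $0$ along $n$ for fixed $m$; so the naive bound is not quite enough. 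The fix is to interleave the two limits: note that for a dyadic-open set $U$ (a union of level-$N$ dyadic squares) one has, for all $n\ge N$, the exact identity $\E[\mu_n^\omega(U)]=\lambda_d(U)$ \emph{and}, more importantly, that $\mu_n^\omega(U)$ for $n\ge N$ is a bounded martingale in $n$ (conditioning on $\cE_N^\omega$ and then on the finer data), converging a.s.\ and in $L^1$ to $\mu^\omega(\overline U)$ up to boundary terms. Since $\cF$ has measure zero it is contained in an open $U$ of arbitrarily small measure which we may take to be a finite union of dyadic squares; then $\E[\mu^\omega(U)]\le\lambda_d(U)$, and letting $\lambda_d(U)\to 0$ along a sequence gives $\E[\mu^\omega(\cF)]=0$, hence $\mu^\omega(\cF)=0$ a.s.

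The main obstacle is precisely this exchange of limits: the weak convergence $\mu_n^\omega\to\mu^\omega$ only yields upper semicontinuity on open sets and lower on closed sets, and the expectation of $\mu_n^\omega(U)$ does not decay in $n$, so one cannot simply bound $\mu^\omega$ by $\lim_n\mu_n^\omega$. The clean way around it is the martingale structure: for each fixed finite union of dyadic squares $U$, the sequence $\bigl(\mu_n^\omega(U)\bigr)_{n\ge N}$ is a nonnegative martingale with constant expectation $\lambda_d(U)$, so it has an a.s.\ limit $L_U$ with $\E[L_U]=\lambda_d(U)$, and one identifies $\mu^\omega(\mathrm{int}\,U)\le L_U\le\mu^\omega(\overline U)$ from weak convergence. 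Choosing $U=U_m\downarrow$ a dyadic neighbourhood of $\cF$ with $\lambda_d(U_m)\to 0$ and using monotonicity gives $\mu^\omega(\cF)\le\inf_m L_{U_m}$ with $\E[\inf_m L_{U_m}]\le\inf_m\lambda_d(U_m)=0$, whence $\mu^\omega(\cF)=0$ almost surely. I would present the martingale claim as a short lemma and keep the dyadic bookkeeping (handling the measure-zero boundaries $\partial U_m$, which are also $\lambda_d$-null and can be absorbed) to a minimum.
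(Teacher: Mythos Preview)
Your overall strategy --- compute $\mathbb{E}[\mu_n^\omega(\,\cdot\,)]$ via Fubini, pass to an open neighbourhood by outer regularity, then transfer to $\mu^\omega$ via weak convergence --- is exactly the paper's. The divergence occurs at the step you flag as ``the main obstacle'': you write that since $\mathbb{E}[\mu_n^\omega(U_m)]=\lambda_d(U_m)$ does not tend to $0$ in $n$, ``the naive bound is not quite enough'', and you bring in a martingale argument to interleave the limits. In fact the naive bound \emph{is} enough, and this is precisely what the paper does. From the portmanteau inequality $\mu^\omega(\cU)\le\liminf_{n}\mu_n^\omega(\cU)$ for open $\cU$, Fatou's lemma gives
\[
\mathbb{E}\bigl[\mu^\omega(\cF)\bigr]\le\mathbb{E}\bigl[\mu^\omega(\cU)\bigr]\le\mathbb{E}\Bigl[\liminf_{n}\mu_n^\omega(\cU)\Bigr]\le\liminf_{n}\mathbb{E}\bigl[\mu_n^\omega(\cU)\bigr]=\lambda_d(\cU)<\varepsilon,
\]
and since $\varepsilon>0$ is arbitrary, $\mathbb{E}[\mu^\omega(\cF)]=0$, so $\mu^\omega(\cF)=0$ almost surely. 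One never needs $\mu_n^\omega(\cU)\to 0$ or $\mathbb{E}[\mu_n^\omega(\cU)]\to 0$ in $n$; all that is needed is that the expectation is \emph{bounded} by $\lambda_d(\cU)$, which can itself be made small.

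Your martingale route is not wrong --- indeed, for a union $U$ of level-$N$ dyadic squares the sequence $\mu_n^\omega(U)$ is actually \emph{constant} for $n\ge N$, so the ``martingale'' is trivial --- but it adds machinery (and some bookkeeping with finite versus countable dyadic covers and boundary sets) that the one-line Fatou argument sidesteps completely.
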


\begin{proof}
We  use $\mathbb{E}$ and $\mathbb{P}$ as the notation of  expectation and probability, respectively. 

Let $\varepsilon>0$, then there is an open set $\cU\supset \cF$ with $\lambda_d(\cU)<\varepsilon$.  Applying Fubini's theorem, see~\cite[Theorem~1.14]{Mattila1995}, we obtain 
$$
\begin{aligned}
\mathbb{E} (\mu_n^{\omega}(\cU))&=\mathbb{E}  \left(\int_{\Tor} \textbf{1}_{(\cU\cap \cE_n^{\omega})}(\vx) (4/3)^nd\vx \right)\\
& =  \int_{\Tor} \mathbb{E} \left(\textbf{1}_{(\cU\cap \cE_n^{\omega})}(\vx) \right) (4/3)^nd\vx \\
&=\int_{\Tor} \mathbb{P}(\vx\in \cU\cap \cE_n^{\omega}) (4/3)^nd\vx\\
&= \lambda_d(\cU).
\end{aligned}
$$
The last identity holds by using the fact that for all $\vx\in \cU$ (with an exceptional set with zero Lebesgue measure, except some grid line) one has  
 $$
 \mathbb{P}(\vx\in \cU\cap \cE_n^{\omega})= \mathbb{P}(\vx\in \cE_n^{\omega})=(3/4)^n. 
 $$
Moreover by~\cite[Theorem~1.24]{Mattila1995} we have   
$$\mu^{\omega}(\cU)\le \liminf_{n\to  \infty}\mu_n^{\omega}(\cU).
$$
 Putting all together and applying  Fatou's  lemma~\cite[Theorem~1.17]{EG},  
 we derive 
$$
\mathbb{E}(\mu^{\omega}(F))\le\mathbb{E}(\mu^{\omega}(\cU))\le \liminf_{n\to  \infty}\mathbb{E} (\mu_n^{\omega}(\cU)) \le \lambda_d(\cU) <\varepsilon. 
$$
By the arbitrary choice of $\varepsilon>0$ we finish the proof.
\end{proof}

Combining  Lemma~\ref{lem:measureargument}  with Theorem~\ref{thm:FK} we obtain

\begin{cor}
\label{cor:Weyl-Cantor}
Almost surely for $\cE^{\omega}\in \Omega$ and  for $\mu^{\omega}$-almost all $\vx\in \cE^{\omega}$ we have 
$$
\uplim_{N\to  \infty} \frac{ |S_2(\vx; N)}{\sqrt{N} g(\ln  N)}<\infty \quad \Longleftrightarrow \quad \sum_{n=1}^{\infty} \frac{1}{ g(n)^{6}} <\infty.
$$
\end{cor}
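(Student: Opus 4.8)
The plan is to reduce the assertion to a single application of Lemma~\ref{lem:measureargument} with $d=2$. Fix the non-decreasing sequence $\{g(n)\}_{n=1}^\infty$ and let $\cF\subseteq\T_2$ denote the set of $\vx$ for which the equivalence in the statement fails, that is, for which exactly one of the two conditions
$$
\uplim_{N\to\infty}\frac{|S_2(\vx;N)|}{\sqrt N\, g(\ln N)}<\infty
\qquad\text{and}\qquad
\sum_{n=1}^\infty\frac{1}{g(n)^6}<\infty
$$
holds. Since the second condition does not depend on $\vx$, the set $\cF$ is precisely the exceptional set appearing in Theorem~\ref{thm:FK}, and hence $\lambda_2(\cF)=0$. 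The point to keep in mind here is purely bookkeeping: $\cF$ is \emph{one} fixed Lebesgue-null set (not one null set per implication), which is what allows a single invocation of Lemma~\ref{lem:measureargument} below, uniformly in whether $\sum_{n=1}^\infty g(n)^{-6}$ converges or diverges.

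Next I would apply Lemma~\ref{lem:measureargument} to this $\cF$ with $d=2$, using the harmless identification of $\T_2$ with $[0,1)^2\subseteq[0,1]^2$: since $\lambda_2(\cF)=0$, almost surely (for $\cE^\omega\in\Omega$) one has $\mu^\omega(\cF)=0$. Fixing such an $\omega$, the set $\cE^\omega\setminus\cF$ then has full $\mu^\omega$-measure, and by the very definition of $\cF$ every point $\vx\in\cE^\omega\setminus\cF$ satisfies the claimed equivalence. This is exactly the assertion of the corollary: almost surely for $\cE^\omega\in\Omega$, and for $\mu^\omega$-almost all $\vx\in\cE^\omega$, the equivalence holds.

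There is no substantial obstacle here: the whole content is carried by Lemma~\ref{lem:measureargument} (that $\mu^\omega$ annihilates Lebesgue-null sets almost surely) together with the fact, built into the formulation of Theorem~\ref{thm:FK}, that the biconditional fails only on a fixed Lebesgue-null set. The same argument, applied to Theorem~\ref{thm:FJK} and the one-dimensional analogue of the random Cantor set construction, would yield the corresponding statement for the Gaussian monomial sums $G(x;N)$.
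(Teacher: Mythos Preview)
Your proof is correct and is exactly the argument the paper has in mind: the corollary is stated immediately after Lemma~\ref{lem:measureargument} with the one-line justification ``Combining Lemma~\ref{lem:measureargument} with Theorem~\ref{thm:FK} we obtain'', and you have simply spelled out that combination in full. The additional bookkeeping observation that $\cF$ is a single fixed null set (independent of whether the series converges) is a helpful clarification but not a departure from the paper's route.
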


We remark that Lemma~\ref{lem:measureargument}  indeed holds for a variety families of random fractals, 
and hence the Corollary~\ref{cor:Weyl-Cantor} also follows immediately. 


%
%
%
%
%
%

\section*{Acknowledgement}

We would like to thank Jens Marklof for informing us on some bounds of quadratic Weyl sums and also  
application of a  dynamical system approach  to the distribution of quadratic Weyl sums. 
We are grateful to Dzmitry Badziahin  and Mumtaz Hussain for helpful discussions  and some additional 
references. We also thank Pablo Shmerkin for  the idea of  Lemma~\ref{lem:measureargument}. 

This work was  supported   by ARC Grant~DP170100786.

\end{document}